\title{Asymptotic properties of parallel Bayesian kernel density estimators}
\author{Alexey Miroshnikov \thanks {Department of Mathematics, University of California, Los Angeles, amiroshn@gmail.com} \and Evgeny Savelev\thanks{Department of Mathematics, Virginia Polytechnic Institute and State University, savelev@vt.edu}}
\newcommand{\diag}{{\text{diag}}}
\newcommand{\RR}{\mathbb{R}}
\newcommand{\eps}{\varepsilon}
\def\clap#1{\hbox to 0pt{\hss#1\hss}}
\newcommand{\SP}{\hspace{1pt}}
\newtheorem{theorem}{Theorem}[section]
\newtheorem*{proposition*}{Proposition}
\newtheorem{proposition}[theorem]{Proposition}
\newtheorem*{theorem*}{Theorem}
\newtheorem*{convention*}{Convention}
\newtheorem*{corollary*}{Corollary}
\newtheorem*{definition*}{Definition}
\newtheorem{definition}[theorem]{Definition}
\newtheorem{lemma}[theorem]{Lemma}
\newtheorem*{lemma*}{Lemma}
\newtheorem{remark}[theorem]{Remark}
\newtheorem*{remark*}{Remark}
\newtheorem*{mtheorem*}{Main Theorem}
\numberwithin{equation}{section}
\newcommand{\bias}{\text{\rm bias}}
\newcommand{\MISE}{\mbox{\rm MISE}}
\newcommand{\MISEB}{\overline{\MISE}}
\newcommand{\Cov}{\mbox{\rm Cov}}
\newcommand{\AMISE}{\mbox{\rm AMISE}}
\newcommand{\AMISEB}{\overline{\AMISE}}
\newcommand{\EXP}{\mathbb{E}}
\newcommand{\VAR}{\mathbb{V}}
\newcommand{\fhat}{\widehat{f}}
\newcommand{\phat}{\widehat{p}}
\newcommand{\lhat}{\widehat{\lambda}}
\newcommand{\chat}{\widehat{c}}
\newcommand{\PP}{\mathbb{P}}
\newcommand{\HOPT}{h^{\rm opt}}
\newcommand{\hopt}{\HOPT}
\newcommand{\hoptv}{{\bf h}^{\rm opt}}
\newcommand{\HOPTAM}{\hoptv}
\newcommand{\NN}{\mathbf{N}}
\newcommand{\hh}{\mathbf{h}}
\newcommand{\ds}{\displaystyle}
\def\wh{\widehat}
\date{}
\begin{document}

\maketitle


\abstract{

In this article we perform an asymptotic analysis of Bayesian parallel kernel density estimators introduced by Neiswanger, Wang and Xing \cite{NCX14}. We derive the asymptotic expansion of the mean integrated squared error for the full data posterior estimator and investigate the properties of asymptotically optimal bandwidth parameters. 
Our analysis demonstrates that partitioning data into subsets requires a non-trivial choice of bandwidth parameters that optimizes the estimation error.
}

\tableofcontents

\section{Introduction}

Recent developments in data science and analytics research have produced an abundance of large data sets that are too large to be analyzed in their entirety. As the size of data sets increases, the time required for processing rises significantly. An effective solution to this problem is to perform statistical analysis of large data sets with the use of parallel computing. The prevalence of parallel processing of large data sets motivated a surge in research on parallel statistical algorithms.

One approach is to divide data sets into smaller subsets, and analyze the subsets on separate machines using parallel Markov chain Monte Carlo (MCMC) methods \cite{LSZ2009,NASW2009, SN2010}. These methods, however, require communication between machines for generation of each sample. Communication costs in modern computer networks dwarf the speed up achieved by parallel processing and therefore algorithms that require extensive communications between machined are ineffective; see Scott \cite{S2016}. 


To address these issues, numerous alternative communication-free parallel MCMC methods have been developed for Bayesian analysis of big data. These methods partition data into subsets, perform independent Bayesian MCMC analysis on each subset, and combine the subset posterior samples to estimate the full data posterior; see \cite{SBBB2014, NCX14, MWC15}.

Neiswanger, Wang and Xing \cite{NCX14} introduced a parallel kernel density estimator that first approximates each subset posterior density; the full data posterior is then estimated by multiplying the subset posterior estimators  together,
\begin{equation}\label{estmod}
   {\wh p}({\bf x}|{\bf y}) \; \propto \;
 \ds \phat^*({\bf x}|{\bf y}) := {\wh p}_1({\bf x}|{\bf y}_1) \cdot {\wh p}_2({\bf x}|{\bf y}_2) \dots \cdot {\wh p}_M({\bf x}|{\bf y}_M)\,.
\end{equation}
Here ${\bf x} \in \RR^d$  is the model parameter,  ${\bf y} = \{ {\bf y}_1, {\bf y}_2, \dots, {\bf y}_M \}$ is the full data set partitioned into $M$ disjoint independent subsets, and 
\begin{equation}\label{KDE}
{\wh p}_m({\bf x}|{\bf y}_m) = \sum_{i=1}^{N_m}\frac{1}{h_m}K\Big(\frac{{\bf x}-{\bf X}_i^m}{h_m}\Big)
\end{equation}
is the subset posterior kernel density estimator, with $h_m \in \RR_+$ a kernel bandwidth parameter.


The authors of \cite{NCX14}  show that the estimator \eqref{estmod} is asymptotically exact and develop a sampling algorithm that generates samples from the distribution approximating the full data estimator. Similar  sampling algorithms were presented and investigated in Dunson and Wang \cite{WD14} and Scott \cite{SBBB2014,S2016}. It has been noted that these algorithms do not perform well for posteriors that have non-Gaussian shape and are sensitive to the choice of the kernel parameters; see \cite{MWC15,SBBB2014,WD14}.

 The highlighted issues indicate that the proper choice of the bandwidth can greatly benefit the accuracy of the estimation as well as sampling algorithms. Moreover, properly chosen bandwidth parameters will improve accuracy of the estimation without incurring additional computational cost.

In the present article, we are concerned with an asymptotic analysis of the parallel Bayesian kernel density estimators of the form \eqref{estmod}. In particular, we are interested in the asymptotic representation of the mean integrated squared error (MISE) for the non-normalized estimator $\phat^*$ and the density estimator $\phat$ as well as the properties of the optimal kernel bandwidth vector parameter $\hh=(h_m)_{m=1}^M$ as $\NN=(N_1,N_2,\dots,N_M) \to \infty$; the issues left open in \cite{NCX14}. 

\par

We also propose a universal iterative algorithm  based on the derived asymptotic expansions that locates optimal parameters without adopting any assumptions on the underlying probability densities.

The  kernel density estimators for the case $M=1$ have been studied extensively in the past five decades. Asymptotic properties of the mean integrated squared error for the estimator \eqref{estmod} with $M=1$ and $d=1$, which takes the form \eqref{KDE}, were studied by Rosenblatt~\cite{RZ56}, Parzen~\cite{P1962} and Epanechnikov~\cite{EP69}. In particular, for sufficiently smooth probability densities Parzen~\cite{P1962}  derived the asymptotic expansion for the mean integrated squared error 
\begin{equation}\label{intromiseexact}
\MISE[p,\phat,\NN,\hh]= \frac{h^4k^2_2}{4}\int_{\RR}(p''(x))^2 dx
+ \frac{1}{nh}\int_{\RR}K^2(t)\,dt+
o\Big( \frac{1}{n h} + h^4\Big)\,,
\end{equation}
with $\NN=n$ and $\hh=h$, and obtained a formula for the asymptotically optimal bandwidth parameter 
\begin{equation}\label{introhoptimal1d}
\HOPT_{M=1}=n^{-1/5}k_2^{-2/5}\left(\int_{\RR}K^2(t)\,dt\right)^{1/5}\Big(\int_{\RR}\big(p''(x)\big)^2 \, dx\Big)^{-1/5}\,
\end{equation}
which minimizes the leading terms in the expansion.

The case of non-differentiable or discontinuous probability density functions has been shown to possess different asymptotic estimates for $\MISE$. It has been shown by van~Eden~\cite{EED85} that the optimal bandwidth parameter $h_{M=1}^{\rm opt} \in \RR$ and the rate of convergence of the mean integrated squared error depend directly on the regularity of the probability density $p$.

In the case of multivariate distributions, $d\geq 1$, the complexity of the asymptotic analysis depends on the form of the bandwidth matrix ${\bf H} \in \RR^{d \times d}$. In the simplest case, one can assume that ${\bf H}=h {\bf I}$, where $h$ is a scalar; see Silverman \cite{SILV85}, Simonoff \cite{SIM1996} and Epanechnikov \cite{EP69}. Another approach is to consider the bandwidth matrix of the form ${\bf H}=\diag(h_1,h_2,\dots,h_d)$, with $h_i$ being a bandwidth parameter for each dimension $i \in \{1,\dots,d\}$. The most general formulation assumes that $\mathbf{H}$ is a $d{\times}d$ matrix, which allows one to encode correlations between components of $\mathbf{x}$; see Duong and Hazelton\cite{DH2005}, and Wand and Jones \cite{WJ1994}.

In the present work, motivated by the ideas of \cite{P1962,DH2005,WJ1994,RZ56} we focus on the case $M>1$ and $d=1$ and do the asymptotic analysis of the mean integrated squared error for both the parallel non-normalized estimator
\begin{equation*}
\begin{aligned} \textstyle
\MISE \big[\phat^* ,p^* \, ; \NN,\hh \big] & = \EXP \int_{\RR} \Big\{ p^*({x}|{\bf y})-\phat^*({x}|{\bf y})) \Big\}^2 dx
 \end{aligned}
\end{equation*}
and the full data set posterior density estimator
\begin{equation*}
\begin{aligned} \textstyle
\MISE \big[\phat ,p \, ; \NN,\hh \big] & = \EXP \int_{\RR} \Big\{ p({x}|{\bf y})-{\wh p}({x}|{\bf y})) \Big\}^2 dx
 \end{aligned}
\end{equation*}
as 
\[
\NN=(N_1,N_2,\dots,N_M) \to \infty, \quad {\bf h}=(h_1,h_2,\dots,h_M) \to 0 \quad \text{and} \quad (\NN \cdot \hh)^{-1} \to 0.
\]

\par

In Theorem \ref{multimiseest}, under appropriate condition on the regularity of the probability density, we  derive  the expression for $\AMISE[p^*,\phat^*]$, the asymptotically leading part of $\MISE$ for the estimator $\phat^*$. 
The leading part turns out to be in agreement with the leading part for the case $M=1$, but in the multi-subset case, $M>1$, the leading part contains novel terms that take into account the relationship between $M$ subset posterior densities $p_m$.


We then perform a similar analysis for the mean square error of the full data set posterior density estimator $\phat$. The presence of the normalizing constant 
\[
\chat =\widehat{\lambda}^{-1}= \bigg(\int \phat_1(x|{\bf y}) \cdot \phat_2(x|{\bf y})\dots \phat_M(x|{\bf y}) dx \bigg)^{-1} = \bigg(\int \phat^*(x|{\bf y}) \, dx \bigg)^{-1}\,
\]
introduces major difficulties in the analysis of MISE because $\chat$ may in general have an infinite second moment in which case $\MISE[\phat,p]$ is not defined. This may occur when the estimators $\phat^*_i$ (on some events) decay too quickly in $x$ variable and the sets of $x$ with the most `mass' for each $\phat^*_i$ have little common  intersection, which potentially leads to large values of $\chat$. To make sure that $\EXP\chat^2<\infty$ one must impose appropriate conditions on the density $p$ and kernel $K$. In this article, however, we take another approach. Instead, we replace the mean integrated squared error by an asymptotically equivalent distance functional denoted by
\begin{equation*}
\begin{aligned} \textstyle
\overline{\MISE} \big[\phat, p \, ; \NN,\hh \big] & = \EXP \bigg[\left(\frac{\lhat}{\lambda}\right)^2\int_{\RR} \Big\{ p({x}|{\bf y})-{\wh p}({x}|{\bf y})) \Big\}^2 dx \bigg].
 \end{aligned}
\end{equation*}
We show that the new functional is always well-defined and that it is asymptotically equivalent to $\MISE$ when restricted to events $\Omega_{\NN} \subset \Omega$ whose probability tends to one as the total number of samples $\|\NN\| \to \infty$.

We then do the analysis of the functional $\MISEB$ by carrying out the same program as for the $\MISE$ of the estimator $\phat^*$. In Theorem \ref{AMISEplmm} we derive the expression for $\AMISEB[p,\phat]$, the asymptotically leading part of the $\MISEB$ for the full data set posterior density estimator $\phat$. The asymptotically optimal bandwidth parameter for the full data set posterior is then defined to be a minimizer
 \[
\HOPTAM = \mathrm{argmin}_{\hh\in\RR^M_{+}}\AMISEB[p, \phat; \NN,\hh]\,.	
 \]

We then compute minimizing bandwidth $\HOPTAM$ in explicit form for two special cases. In the examples presented here we consider subset posterior densities of normal and gamma distributions; see \eqref{hoptsym}, \eqref{hoptbandnorm}, and \eqref{hoptgamma}. In the two examples the optimizing bandwidth vectors differ significantly and depend, as expected, directly on the full data set density which is typically unknown. For that reason we propose an iterative algorithm for locating optimal bandwidth parameters based on asymptotic expansion we derived; see Algorithm \ref{minhalg}. 



Our analysis demonstrates that partitioning data into $M>1$ sets affects the optimality condition of parameter ${\hh}$. It also indicates that the bandwidth vector 
\[
\hh^{\rm opt}_0=(\hopt_{1,M=1},\hopt_{2,M=1},\dots,\hopt_{M,M=1})\,,
\] 
which minimizes the `componentwise' mean integrated squared error
\[
\sum_{i=1}^M \MISE[\phat_i,p_i,N_i,h_i]\,,  
\]
where $\HOPT_{m,M=1}$ is the optimal bandwidth parameter for the estimator  $\phat_m(x|{\bf y_m})$ given by \eqref{introhoptimal1d}, is suboptimal for both estimators $\phat^*$ and $\phat$ whenever $M>1$. 


This observation highlights the fact that the choice of optimal parameters for parallel kernel density estimators (suitable for parallelizing data analysis) must differ from the theoretical choice suggested in case of processing on a single machine. We must also note, that the increased values of $\MISE$ resulted from choosing a suboptimal bandwidth parameter get compounded in case of parallel processing. This further necessitates the importance of a proper choice of bandwidth, especially if it comes at no additional computational costs.

The paper is arranged as follows. In Section \ref{sec::notation} we set notation and hypotheses that form the foundation of the analysis. In Section \ref{sec_mise_nonnorm} we derive an asymptotic expansion for $\MISE$ of the non-normalized estimator as well as derive formulas for leading parts of $\bias[\phat^*]$ and $\VAR[\phat^*]$, which are central to the analysis performed in subsequent sections. In Section \ref{sec_mise_norm} we perform the analysis of $\MISEB$ for the full data set posterior density. In Section \ref{examplesSec} we compute explicit expressions for optimal bandwidth parameters for several special cases and conduct numerical experiments.  Finally, in the appendix we provide supplementary lemmas and theorems employed in Section \ref{sec_mise_nonnorm} and Section \ref{sec_mise_norm}.

\section{Notation and hypotheses}\label{sec::notation}

For the convenience of the reader we collect in this section all hypotheses and results relevant to our analysis and present the notation that is utilized throughout the article.
\par
\begin{enumerate}
\renewcommand{\labelenumi}{\textbf{(\theenumi)}}
\renewcommand{\theenumi}{H\arabic{enumi}}
	\item\label{model} Motivated by the form of the posterior density at Neiswanger et al. \cite{NCX14} we consider the probability density function of the form
\begin{equation}
p(x) \propto p^*(x) \quad \text{where} \quad  p^*(x):=\prod_{m=1}^M p_m(x)
\end{equation}
Here $p_m(x)$ is a probability density function for each $m \in \{1,\dots,M \}$ . 
	\item\label{hyp2} We consider the estimator of $p$ in the form
	\begin{equation}\label{estmodel}\tag{H2-a}
	   \hat{p}(x)\propto
		\ds \hat{p}^*(x) \quad \text{where} \quad \hat{p}^*(x):=\prod_{m=1}^{M}\hat{p}_m(x)
	\end{equation}
	and for each $m \in \{1,\dots,M\}$  $\hat{p}_m(x)$ is the kernel density estimator of the probability density $p_m(x)$ that has the form
	\begin{equation}\label{subpost}\tag{H2-b}
	\hat{p}_m(x)=\frac{1}{N_m h_m}\sum_{i=1}^{N_m}K\left(\frac{x-X^{m}_{i}}{h_m}\right).	
	\end{equation}
	Here $X_1^m, X_2^m, \dots,X^m_{N_m} \sim p_m(x)$ are independent identically distributed random variables, $K$ is a kernel density function, and $h_m>0$ is a bandwidth parameter.
\end{enumerate}
\par

The mean integrated squared error of the estimator $\phat^*$ of the non-normalized product $p^*$ as well as for the estimator $\phat(x)$ of the full posterior density $p(x)$ is defined by
\begin{equation}\label{mise}
	\begin{aligned}
	\MISE[p^*,\phat^*,\NN,\hh] &= \MISE[p^*, \phat^*(x)] :=  \EXP \int_{\RR} (\phat^*(x) - p^*(x))^2 \, dx\\
	\MISE[p,\phat,\NN,\hh] &= \MISE[p, \phat(x)] :=  \EXP \int_{\RR} (\phat(x) - p(x))^2 \, dx\\
	\end{aligned}
\end{equation}
where we use the notation $\hh = (h_m)_{m=1}^M$ and $\NN = (N_m)_{m=1}^N$. We also use the following convention for the bias and variance of estimators $\phat(x), \phat^*(x), \phat_m(x)$
\begin{equation}\label{biasvarconv}
\begin{aligned}
    \bias[\hat{p}(x)] &= \EXP \big[\hat{p}(x)\big] - p(x)  \\
    \bias[\hat{p}^*(x)] &= \EXP \big[\hat{p}^*(x)\big] - p^*(x)  \\
    \bias[\hat{p}_m(x)] &= \EXP \big[\hat{p}_m(x)\big] - p_m(x)\,, \quad m \in \{1,\dots,M\}.
\end{aligned}
\end{equation}


We assume that the kernel density function $K$ and probability densities functions $p_1,\dots,p_M$ satisfy the following hypotheses:
\begin{enumerate}
\setcounter{enumi}{2}
\renewcommand{\labelenumi}{\textbf{(\theenumi)}}
\renewcommand{\theenumi}{H\arabic{enumi}}
  \item\label{kcond1}  $K$ is positive, bounded, normalized, and its first moment is zero, that is
  \begin{equation}
0 \leq K(t) \leq C, \quad \int_{\mathbb{R}}K(t)\,dt=1\,,\quad
\int_{\mathbb{R}}t\,K(t)\,dt=0\,, \quad \int_{\mathbb{R}} K^2(t)\,dt < \infty
\end{equation}

  \item\label{kcond2} For each  $s \in \{0,1,2,3\}$
\begin{equation}
k_s=\int_{\mathbb{R}}|t|^{s} K(t)\,dt \, < \, \infty\,.
\end{equation}

\item\label{pcond1} For each $m \in \{1,\dots,M\}$, $s \in \{0,1,2,3\}$ and density $p_m \in C^3(\RR)$ there exists a constant $C \geq 0$ such that
\begin{equation}
|p_m^{(s)}(x)|< C \quad\text{for all } x \in \RR\,.
\end{equation}

\item\label{pcond2} For each $m \in \{1,\dots,M\}$ and $s \in \{0,1,2,3\}$ the density $p_m(x)$ and its derivatives are integrable, that is, there is a constant $C$ so that 
\begin{equation}
	\int_{\RR}|p_m^{(s)} (x)|\,dx=C<\infty\,.
\end{equation}

%

\item\label{unifconvNh} Functions
\[
\begin{aligned}
 \NN(n)&=\{N_1(n),N_2(n),N_3(n),\ldots,N_M(n)\}:\mathbb{N}\to\mathbb{N}^M\\
 \hh(n)&=\{h_1(n),h_2(n),\ldots,h_M(n)\}:\mathbb{N}\to\RR_{++}^M
\end{aligned}
\]
satisfy for all $i \in \{1,2,\ldots, M\}$
\begin{equation}
\begin{aligned}
&D_1\leq \frac{N_i}{n}\leq D_2 \quad \text{for some }0<D_1<D_2 \\
&  A_1 N_i(n)^{-\alpha_0} \leq h_i(n) \leq A_2 N_i(n)^{-\alpha_0} \quad \text{for some} \quad \alpha_0\in(0,1)\\[2pt]
&\lim_{n\to\infty}{h_i(n) N_i(n)}=\infty\,.\\
\end{aligned}
\end{equation}
We also define $\underline{N}(n)=\min_{i} N_i(n)$ and note that $C_1\|\NN\| \leq  \underline{N}(n) \leq C_2\|\NN(n)\| $.





\end{enumerate}

\par\smallskip


\section{Asymptotic analysis of $\MISE$ for $\phat^*$}\label{sec_mise_nonnorm}


We start with the observation that $\MISE$ can be expressed via the combination of bias and variance

\begin{equation}\label{multimiseprod}
  \begin{aligned}
       \MISE[p^*,\phat^*]&=\EXP \int_{\mathbb{R}}  \big(\phat^*(x)-p^*(x)\big)^2\,dx\\
     &  = \int_{\RR} \Big(\bias\big[\phat^*(x)\big] \Big)^2\, dx + \int_{\RR} \VAR[\phat^*(x)] \, dx.
  \end{aligned}
\end{equation}

In what follows we do the analysis of the bias, then that of variance and conclude with the section where we derive the formula for the optimal bandwidth vector.

\subsection{Bias expansion}

Using the fact that $\phat_i(x)$, $i=1,\dots,M$ are independent, we obtain
\begin{equation}\label{multiprodbias}
\begin{aligned}
	\bias[\phat^*(x)]&=\EXP[\phat^*(x)]-p^*(x) \\[3pt]
    & =\prod_{m=1}^M \EXP[\phat_m](x)-\prod_{m=1}^M p_m(x)\\[3pt]
    & = \prod_{m=1}^M\big(\bias[p_m(x)]+p_m(x)\big)-\prod_{m=1}^M p_m(x)
\end{aligned}
\end{equation}
To simplify notation in \eqref{multiprodbias} we shall employ the multiindex notation. Let $\alpha$ be the multiindex with
$$
	\alpha = (\alpha_1,\alpha_2,\ldots,\alpha_M)\qquad\alpha_m\in\{0,1\}.
$$
Then the above formula can rewritten as follows
\begin{equation}\label{multiprodbias2}
\begin{aligned}
	\bias[\phat^*(x)]&=\sum_{1\leqslant|\alpha|\leqslant M}\prod_{m=1}^M\bias^{\alpha_m}[\phat_m(x)]\big(p_m(x)\big)^{(1-\alpha_m)}\\
	&=\sum_{m=1}^{M}\left[\bias[\phat_m(x)]\prod_{\substack{k=1\\k \neq m}}^M p_k(x)\right] \\
	&\qquad+ \sum_{2\leqslant|\alpha|\leqslant M}\prod_{m=1}^M \big(\bias[\phat_m(x)]\big)^{\alpha_m}(p_m(x))^{(1-\alpha_m)}.
\end{aligned}
\end{equation}

\par

Using this decomposition, we prove the following lemma
\begin{lemma}\label{miltibiasformula}
Suppose hypotheses \eqref{kcond1}-\eqref{pcond2} hold. Then
\begin{itemize}
\item [$(i)$] The bias can be expressed as
  \begin{equation}\label{miltipiecebiasexp}
  \begin{aligned}
\bias[\phat^*(x)] &= \frac{k_2}{2} \sum_{m=1}^{M}\left[h_m^2 p_m''(x)\prod_{\substack{k=1\\k\neq m}}^M p_k(x)\right] + E_b(x \SP;  \hh)
\end{aligned}
\end{equation}
where the error term $E_b(x; \hh)$ satisfies the bounds
  \begin{equation}\label{multibiaserrbound}
  \begin{aligned}
    |E_b(x \SP; \hh)|  &\leq E_{\infty}||\hh ||^3 \,, \quad \forall x\in \RR \\
    \int_{\RR}  |E_b(x \SP ;\hh )|\, dx  &\leq  E_{1}||\hh||^3 \\
    \int_{\RR} |E_b(x \SP ; \hh)|^2\, dx  & \leq  E_{2}||\hh||^6
    \end{aligned}
\end{equation}
  \item [$(ii)$] The square-integrated bias satisfies
  \begin{equation}\label{multisqintbias}
    \int_{\RR} \bias ^2[\phat^*(x)] \, dx  \, = \frac{k_2^2}{4} \int_{\RR}\left[\sum_{m=1}^{M}h_m^2 p_m''(x)\prod_{\substack{k=1\\k\neq m}}^M p_k(x)\right]^2\,dx + \mathcal{E}_b(\hh) \, < \, \infty
  \end{equation}
  with the error term satisfying
  \begin{equation}\label{multiintbias2err}
  |\mathcal{E}_b(\hh)| \leq C||\hh||^5
  \end{equation}
  where the constant $C$ is independent of $\NN$ and $\hh \in \RR_+^M$.
\end{itemize}
\end{lemma}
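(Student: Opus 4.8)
The plan is to reduce everything to the bias of a single kernel density estimator $\phat_m$ and then propagate the resulting estimates through the product decomposition \eqref{multiprodbias2}. Since the $X_i^m$ are i.i.d.\ with density $p_m$, the expectation collapses to $\EXP[\phat_m(x)] = \frac{1}{h_m}\int_\RR K\big(\frac{x-y}{h_m}\big)p_m(y)\,dy = \int_\RR K(t)\,p_m(x-h_m t)\,dt$ after the substitution $t=(x-y)/h_m$; in particular the bias of each $\phat_m$ depends only on $h_m$ and not on $N_m$, which is ultimately what makes all constants $\NN$-independent. First I would Taylor expand $p_m(x-h_m t)$ to second order with the integral form of the remainder, integrate against $K$, and invoke the normalization, vanishing first moment, and $k_2$ from \eqref{kcond1}--\eqref{kcond2} to obtain
\[
\bias[\phat_m(x)] = \tfrac{k_2}{2}h_m^2 p_m''(x) + R_m(x),\quad R_m(x) = -\tfrac{h_m^3}{2}\int_\RR K(t)\,t^3\!\!\int_0^1(1-s)^2 p_m'''(x-s h_m t)\,ds\,dt.
\]

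For the remainder I would establish three bounds. The pointwise estimate $|R_m(x)|\le \tfrac{k_3}{6}\,\|p_m'''\|_\infty\, h_m^3$ is immediate from \eqref{pcond1} and \eqref{kcond2}. For the $L^1$ bound I would apply Fubini to the nonnegative integrand and use translation invariance, $\int_\RR |p_m'''(x-sh_m t)|\,dx = \int_\RR|p_m'''(u)|\,du$, which is finite by \eqref{pcond2}; this yields $\int_\RR |R_m|\,dx \le C h_m^3$. The $L^2$ bound then comes for free via $\int_\RR R_m^2 \le \|R_m\|_\infty \int_\RR |R_m| \le C h_m^6$. The same interpolation applied to the full single-estimator bias $b_m := \bias[\phat_m]$ gives $\|b_m\|_\infty$, $\|b_m\|_{L^1}$, $\|b_m\|_{L^2}$ all of order $h_m^2$.

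Next I would substitute $b_m = \tfrac{k_2}{2}h_m^2 p_m'' + R_m$ into \eqref{multiprodbias2}. The linear-in-$b_m$ sum splits into the asserted leading term $\tfrac{k_2}{2}\sum_m h_m^2 p_m''\prod_{k\ne m}p_k$ plus $\sum_m R_m\prod_{k\ne m}p_k$, while the $|\alpha|\ge 2$ block collects the genuinely higher-order cross terms; hence $E_b$ is precisely these two leftover pieces. Using $\|p_k\|_\infty\le C$ on the bounded factors together with the remainder bounds, the $R_m$-piece is of order $\|\hh\|^3$ in each of $L^\infty$, $L^1$, $L^2$, whereas every $|\alpha|\ge2$ term carries at least two factors of size $h_m^2$, so (keeping one factor in the relevant norm and bounding the rest in sup) it is $O(\|\hh\|^4)$ in $L^\infty$ and $L^1$ and $O(\|\hh\|^8)$ in $L^2$, hence dominated by the $R_m$-piece for $\|\hh\|\le 1$. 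This establishes \eqref{multibiaserrbound}. For part $(ii)$ I would write $\int_\RR\bias^2 = \int_\RR B^2 + 2\int_\RR B\,E_b + \int_\RR E_b^2$ with $B$ the leading term: $\int B^2$ is the claimed main term and is finite because $B\in L^2$ (each summand is a product of bounded integrable functions), Cauchy--Schwarz with $\|B\|_{L^2}= O(\|\hh\|^2)$ and $\|E_b\|_{L^2}=O(\|\hh\|^3)$ bounds the cross term by $C\|\hh\|^5$, and $\int E_b^2 = O(\|\hh\|^6)$, yielding $|\mathcal{E}_b(\hh)|\le C\|\hh\|^5$.

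I expect the delicate points to be two. The first is the $L^1$ remainder bound, which genuinely requires the integral form of the remainder plus Fubini so that translation invariance can be applied to $\int_\RR|p_m'''|$; a crude Lagrange-remainder estimate, depending on an unspecified intermediate point, is not obviously integrable in $x$. The second is the multiindex bookkeeping needed to confirm that each $|\alpha|\ge2$ term is a strictly higher power of $\|\hh\|$ than both the leading $O(\|\hh\|^2)$ term and the $O(\|\hh\|^3)$ error, so that it is safely absorbed. By contrast, the assertion that the constants are independent of $\NN$ is not an obstacle: it is immediate from the observation above that $\EXP[\phat_m]$, and therefore every bias quantity appearing here, does not depend on $N_m$.
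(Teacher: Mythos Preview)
Your proposal is correct and follows essentially the same approach as the paper: both reduce to the single-estimator bias expansion $\bias[\phat_m]=\tfrac{k_2}{2}h_m^2 p_m''+R_m$ with the integral-form remainder (the paper defers this to Lemma~\ref{biasformula}), propagate it through the multiindex decomposition \eqref{multiprodbias2}, bound the error pieces in $L^\infty$ and $L^1$ and then in $L^2$ via $\int|E_b|^2\le\|E_b\|_\infty\int|E_b|$, and finish part $(ii)$ by squaring and Cauchy--Schwarz. The only cosmetic difference is that the paper regroups the error by separating all terms containing some $E_{b,m}$ from the pure $\big(\tfrac{k_2}{2}h_m^2 p_m''\big)^{\alpha_m}$ terms (via auxiliary functions $P_m$), whereas you split more directly by $|\alpha|=1$ versus $|\alpha|\ge 2$; both yield the same bounds under the standing assumption that $\|\hh\|$ is bounded.
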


\begin{proof}
	According to \eqref{multiprodbias2} and \eqref{biasexp} we have
	\begin{align*}
		&\bias[\phat^*(x)]=\\
		&\qquad=\frac{k_2}{2} \sum_{m=1}^{M}\left[h_m^2 p''_m(x)\prod_{\substack{k=1\\k \neq m}}^M p_k(x)\right]
		+\sum_{m=1}^{M}\left[E_{b,m}\prod_{\substack{k=1\\k \neq m}}^M p_k(x)\right]\\
		&\qquad\qquad+\sum_{2\leqslant|\alpha|\leqslant M}\prod_{m=1}^M \left(\frac{h_m^2 k_2}{2}  p_m''(x)+E_{b,m}\right)^{\alpha_m}(p_m(x))^{(1-\alpha_m)}
	\end{align*}
	Here $E_{b,m}$ is the error in bias approximation for each $\phat_m$ from \eqref{biasexp}. We are computing bounds for
	\begin{equation}\label{multibiaserr}
	\begin{aligned}
	&E_b(x \SP; \hh)=\\
	&\qquad=\sum_{m=1}^{M}\left[E_{b,m}\prod_{\substack{k=1\\k \neq m}}^M p_k(x)\right]+\sum_{2\leqslant|\alpha|\leqslant M}\prod_{m=1}^M \left(\frac{h_m^2 k_2}{2}  p_m''(x)+E_{b,m}\right)^{\alpha_m}(p_m(x))^{(1-\alpha_m)}
	\end{aligned}
	\end{equation}
	To simplify the derivations we separate the terms in \eqref{multibiaserr} into two groups: terms with at least one multiple of $E_{b,m}$ and terms free of $E_{b,m}$. We define the sets
	\begin{equation}\label{amparts}
		A_m =\Big\{\alpha = (\alpha_j)_{j=1}^M\ :\ \alpha_m=0\text{ and }1\leq|\alpha|\leq(M-1)\Big\}
	\end{equation}
	and functions
	\begin{equation}
		P_m(x)=\prod_{\substack{k=1\\k \neq m}}^M p_k(x) + \sum_{\alpha\in A_m}\left[\prod_{\substack{j=1\\j \neq m}}^M \left(\frac{h_j^2 k_2}{2}  p_j''(x) + \mathds{1}_{\{j>m\}}E_{b,j}\right)^{\alpha_j}(p_j(x))^{(1-\alpha_j)}\right].
	\end{equation}
	Here $\mathds{1}$ is the characteristic function. Consequently, the error term can be written as follows
	\begin{equation}\label{rearrngderr}
	\begin{aligned}
	&E_b(x \SP; \hh)=\\
	&\qquad=\sum_{m=1}^{M}\left[E_{b,m} P_m(x)\right]+\sum_{2\leqslant|\alpha|\leqslant M}\prod_{m=1}^M \left(\frac{h_m^2 k_2}{2}  p_m''(x)\right)^{\alpha_m}(p_m(x))^{(1-\alpha_m)}.
	\end{aligned}
	\end{equation}
	Assuming that $||\hh||$ is bounded, \eqref{pcond1} and \eqref{biasexp}, we can conclude that there is a constant $C_P$ so that
	$$
		|P_m(x)|\leq C_P\text{ for any }x\in\RR\text{ and }1\leq m\leq M
	$$
	Using \eqref{pcond1} and \eqref{biasexp}, we conclude that the first term is bounded, and there is a constant $C$ so that
	\begin{equation}\label{biasexpcp}
		\sum_{m=1}^{M}\left|E_{b,m}P_m(x)\right|\leq C\sum_{m=1}^{M}\left(\frac{k_3 h_m^3}{6}\right)\leq CM\frac{||\hh||^{3} k_3}{6}.
	\end{equation}
	The next sum in \eqref{rearrngderr} contains terms are bounded due to \eqref{pcond1}:
	$$
		\left|\frac{h_m^2 k_2}{2}  p_m''(x)\right|\leq\frac{||\hh||^2 C k_2}{2}\qquad\text{and}\qquad |p_m(x)|\leq C
	$$
	For some appropriate constants $C$. Since each one of the products below has at least two terms with $p_m''(x)$ for some $m$, a constant $C_Q$ must exist, so that
	\begin{equation}\label{biasexpcq}
		\left|\sum_{2\leqslant|\alpha|\leqslant M}\prod_{m=1}^M \left(\frac{h_m^2 k_2}{2}  p_m''(x)\right)^{\alpha_m}(p_m(x))^{(1-\alpha_m)}\right|\leq C_Q\frac{||\hh||^4 k^2_2}{4}
	\end{equation}
	The inequalities \eqref{biasexpcp} and \eqref{biasexpcq} imply the first inequality in \eqref{multibiaserrbound}:
	\begin{equation}\label{miltiinfnorm}
		|E_b(x \SP; \hh)|\leq CM\frac{||\hh||^{3} k_3}{6}+\frac{||\hh||^4 k^2_2}{4}C_Q
	\end{equation}

	$L_1$ integrability follows from conditions \eqref{pcond1}, \eqref{pcond2}, the expansion \eqref{rearrngderr} and the second formula in \eqref{biaserrbound}
	\begin{equation}\label{multil1norm}
	\int_{\RR}|E_b(x \SP; \hh)|\,dx\leq C\left(\frac{k_3 ||\hh||^3}{6} + \frac{||\hh||^4 k_2^2 }{4}\right),
	\end{equation}
	which proves the second estimate in \eqref{multibiaserrbound}.\\
	Using the estimates obtained above, we conclude
	$$
	\int_{\RR}|E_b(x \SP; {\bf h})|^2\,dx\leq \sup_{\RR}|E_b(x \SP; {\bf h})|\cdot\int_{\RR}|E_b(x \SP; {\bf h})|\,dx\leq E_{\infty}\cdot E_{1}||\hh||^6
	$$
	Finally, (ii) follows from Cauchy-Schwartz inequality applied to
	$$
	\begin{aligned}
	\bias^2[\phat^*(x)]&=\frac{k_2^2}{4} \left[\sum_{m=1}^{M}h_m^2 p_m''(x)\prod_{\substack{k=1\\k\neq m}}^M p_k(x)\right]^2+\\
											&+E_b(x\SP;\hh) k_2 \left[\sum_{m=1}^{M}h_m^2 p_m''(x)\prod_{\substack{k=1\\k\neq m}}^M p_k(x)\right] +E_b^2(x\SP;\hh)
	\end{aligned}
	$$
	which leads directly to \eqref{multisqintbias} and \eqref{multiintbias2err}
\end{proof}

\subsection{Variance expansion}

We next obtain an asymptotic formula for the variance of $\hat{p}^*$. For the proof of the lemma, we perform the following preliminary calculation
\begin{equation}\label{multivarexpansion}
\begin{aligned}
	\VAR[\phat^*(x)]&=\EXP[(\phat^*(x))^2]-\Big(\EXP[\phat^*(x)]\Big)^2=\prod_{m=1}^{M}\EXP[\phat_m^2]-\prod_{m=1}^{M}\EXP^2[\phat_m]\\
		&=\prod_{m=1}^M\Big(\VAR[\phat_m]+\big(p_m+\bias[\phat_m]\big)^2\Big)-\prod_{m=1}^M \Big(p_m+\bias[\phat_m]\Big)^2\\
		&=\sum_{1\leqslant|\alpha|\leqslant M}\prod_{m=1}^M\big(\VAR[\phat_m]\big)^{\alpha_m}\big(p_m+\bias[\phat_m]\big)^{2(1-\alpha_m)}\\
\end{aligned}
\end{equation}

\begin{lemma}\label{varmulti}
Let hypotheses \eqref{kcond1}-\eqref{unifconvNh} hold. Then
\begin{itemize}
\item [$(i)$] The variation of $\hat{p}^*$ is given by
\begin{equation}\label{multivarexp}
\VAR[\hat{p}^*(x)] =\left(\sum_{m=1}^{M}\left[\frac{p_m}{N_m h_m}\prod_{\substack{k=1\\ k \neq m }}^M p_k^2(x)\right]\right)\int_{\RR}K^2(t)\,dt+ E_{V}(x \SP ; \NN,\hh)\,, \quad x \in \RR
\end{equation}
where the error term $E_V(x \SP; n,\hh)$ satisfies the bounds
\begin{equation}\label{multiL1errvarbnd}
\begin{aligned}
|\mathcal{E}_V(N,h)|&:=\left|\int_{\RR} E_V(x) \,dx \right| = o\left( \frac{1}{\|\NN\|}\right)
\end{aligned}
\end{equation}
\end{itemize}
\end{lemma}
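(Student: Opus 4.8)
The plan is to start from the product expansion \eqref{multivarexpansion}, which already writes $\VAR[\phat^*(x)]$ as a sum over multi-indices $\alpha\in\{0,1\}^M$, and to isolate the single leading block $|\alpha|=1$ from the higher-order blocks $2\leq|\alpha|\leq M$. For the individual factors I would invoke the classical single-subset expansions. The variance of a one-dimensional kernel estimator,
\[
\VAR[\phat_m(x)]=\frac{p_m(x)}{N_m h_m}\int_{\RR}K^2(t)\,dt+E_{V,m}(x\SP;N_m,h_m),
\]
is obtained as in the $M=1$ theory by writing $\VAR[\phat_m]=\tfrac{1}{N_m}(\EXP[Y_1^2]-(\EXP Y_1)^2)$ with $Y_1=\tfrac1{h_m}K(\tfrac{x-X_1^m}{h_m})$ and Taylor-expanding $p_m(x-h_m t)$ under \eqref{pcond1}--\eqref{pcond2}; this is the $M=1$ analogue of \eqref{biasexp} supplied in the appendix. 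Combined with the bias expansion \eqref{biasexp} already used in Lemma \ref{miltibiasformula}, this first step produces the stated principal term and collects everything else into $E_V$.

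Second, I would treat the $|\alpha|=1$ block
\[
\sum_{m=1}^M \VAR[\phat_m(x)]\prod_{\substack{k=1\\k\neq m}}^M\bigl(p_k(x)+\bias[\phat_k(x)]\bigr)^2.
\]
Substituting the variance expansion and expanding each square as $p_k^2+2p_k\bias[\phat_k]+\bias^2[\phat_k]$, the product of the leading variance term with $\prod_{k\neq m}p_k^2$ yields the asserted main term. The remaining pieces split into two families: (a) the single-subset remainders $E_{V,m}(x)\prod_{k\neq m}p_k^2(x)$ together with their bias-perturbed variants, and (b) terms in which $\VAR[\phat_m]$ multiplies at least one factor $\bias[\phat_k]$. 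Family (b) carries an extra power of $\|\hh\|^2$ relative to the main term and is integrated using the pointwise bias bound $|\bias[\phat_k(x)]|\leq C\|\hh\|^2$ from \eqref{biasexp} together with the $L^1$ bounds on $p_m$ in \eqref{pcond2}; family (a) is controlled directly by the single-subset variance error estimate.

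Third, for the higher-order blocks $2\leq|\alpha|\leq M$ I would use that each such term contains a product of at least two variances. Bounding $\VAR[\phat_m(x)]$ by $\tfrac{C}{N_m h_m}p_m(x)+|E_{V,m}(x)|$ and the remaining factors $(p_k+\bias[\phat_k])^2$ by constants via \eqref{pcond1} and Lemma \ref{miltibiasformula}, the integral of each block is of order $\prod_{m\in\mathrm{supp}\,\alpha}\tfrac1{N_m h_m}$, one order smaller in the number of $\tfrac1{N_m h_m}$ factors than the $|\alpha|=1$ contribution. Here I would lean on the scaling relations in \eqref{unifconvNh}, namely $N_i\asymp n\asymp\|\NN\|$, $h_i\asymp N_i^{-\alpha_0}$ and $N_ih_i\to\infty$, to convert every resulting rate into a comparison against $\tfrac1{\|\NN\|}$.

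The hard part will be the final bookkeeping that turns these heterogeneous estimates into the single sharp statement $\bigl|\int_{\RR}E_V\,dx\bigr|=o(1/\|\NN\|)$. Two points demand care. First, no uniform-in-$x$ bound suffices on its own, since the leading variance profile is not integrable against a constant; exactly as in the bias lemma, one must pair the $L^\infty$ bounds with the $L^1$ integrability of $p_m,p_m',p_m''$ from \eqref{pcond1}--\eqref{pcond2}, typically through Cauchy--Schwarz, in order to integrate the error. Second, and most delicate, one must verify that the subleading contributions of the single-subset variance and the two-or-more-variance blocks are genuinely negligible at the claimed rate across the whole admissible range $\alpha_0\in(0,1)$ of \eqref{unifconvNh}; tracking the competing exponents produced by $h_i\to0$ and $N_i h_i\to\infty$, and confirming they all sit strictly below $1/\|\NN\|$, is the crux of the estimate and is where the full strength of the scaling hypotheses is used.
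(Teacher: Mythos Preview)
Your proposal is correct and follows essentially the same route as the paper: both start from the multi-index expansion \eqref{multivarexpansion}, substitute the single-subset variance expansion \eqref{varexp}, isolate the $|\alpha|=1$ leading block to extract the main term, and bound the $|\alpha|\geq 2$ blocks and the bias-perturbed remainders using \eqref{pcond1}--\eqref{pcond2} together with the scaling relations of \eqref{unifconvNh}. The paper packages the bookkeeping via auxiliary functions $P_m^0(x)$ and $Q_m^1(x)$ rather than Cauchy--Schwarz, but this is a purely organizational difference, and your caution about checking the exponents across the full range $\alpha_0\in(0,1)$ is well placed.
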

\begin{proof}
According to \eqref{multivarexpansion} we have
\begin{equation}\label{multivarerr}
\begin{aligned}
	&\VAR(\hat{p}^*(x)) = \\
	&\qquad=\sum_{1\leqslant|\alpha|\leqslant M}\prod_{m=1}^M\left(\frac{p_m(x)}{N_m h_m}\int_{\RR}K^2(t)\,dt+E_{V,m}\right)^{\alpha_m}\big(p_m+\bias[\phat_m]\big)^{2(1-\alpha_m)}\\
	&\qquad=\sum_{1\leqslant|\alpha|\leqslant M}\prod_{m=1}^M\left(\frac{p_m(x)}{N_m h_m}\int_{\RR}K^2(t)\,dt+E_{V,m}\right)^{\alpha_m}\big(p_m^2+2p_m\,\bias[\phat_m]+\bias^2[\phat_m]\big)^{(1-\alpha_m)}
\end{aligned}
\end{equation}
Here, $E_{V,m}$ is the approximation error of variance of each $p_m(x)$ from \eqref{varexp}. In a fashion similar to the previous proof, we separate the terms in \eqref{multivarerr}. We single out the leading order terms, the terms with at least one multiple of $E_{V,m}$, the terms with multiples of $\bias[\phat_m]$ and the terms of the order $o\left(\frac{1}{\|\NN\|\|\hh\|}\right)$.

We define sets
\begin{equation}\label{amzparts}
	\begin{aligned}
	A^0_m &=\Big\{\alpha = (\alpha_j)_{j=1}^M\ :\ \alpha_m=0\text{ and }0\leq|\alpha|\leq(M-1)\Big\}\\
	B^1_m &=\Big\{\alpha = (\alpha_j)_{j=1}^M\ :\ \alpha_m=0\text{ and }|\alpha|=1\Big\}
	\end{aligned}
\end{equation}
and functions
\begin{equation}
	\begin{aligned}
	P^0_m(x)&=\sum_{\alpha\in A^0_m}\left[\prod_{\substack{j=1\\j \neq m}}^M \left(\frac{p_m(x)}{N_m h_m}\int_{\RR}K^2(t)\,dt+\mathds{1}_{\{j>m\}}E_{V,m}\right)^{\alpha_m}\big(\EXP^2[\phat_m]\big)^{(1-\alpha_m)}\right],\\
	Q^1_m(x)&=\sum_{\alpha\in B^1_m}\left[\prod_{\substack{j=1\\j \neq m}}^M \left(\frac{p_m(x)}{N_m h_m}\int_{\RR}K^2(t)\,dt \right)^{\alpha_m}\big(\EXP^2[\phat_m]\big)^{(1-\alpha_m)}\right],\\
	\end{aligned}
\end{equation}
The variance expansion can be rewritten as
\begin{equation}
\begin{aligned}
	&\VAR(\hat{p}^*(x)) = \\
	&\qquad=\sum_{1\leqslant|\alpha|\leqslant M}\prod_{m=1}^M\left(\frac{p_m(x)}{N_m h_m}\int_{\RR}K^2(t)\,dt\right)^{\alpha_m}\big(p_m^2+2p_m\,\bias[\phat_m]+\bias^2[\phat_m]\big)^{(1-\alpha_m)}\\
	&\qquad\qquad+\sum_{m=1}^M E_{V,m} P^0_m(x)\\
	&\qquad=\sum_{m=1}^M\left(\frac{p_m(x)}{N_m h_m}\int_{\RR}K^2(t)\,dt\right)\prod_{\substack{j=1\\j \neq m}}^Mp_m^2(x)\\
	&\qquad\qquad+\sum_{m=1}^M\bias[\phat_m](2p_m(x)+\bias[\phat_m])Q_m^1(x)\\
	&\qquad\qquad+\sum_{2\leqslant|\alpha|\leqslant M}\prod_{m=1}^M\left(\frac{p_m(x)}{N_m h_m}\int_{\RR}K^2(t)\,dt\right)^{\alpha_m}\big(\EXP^2[\phat_m]\big)^{(1-\alpha_m)}\\
	&\qquad\qquad+\sum_{m=1}^M E_{V,m} P^0_m(x)\\
\end{aligned}
\end{equation}
Based on definitions of functions $P^0_m(x)$ and $Q^1_m(x)$, hypotheses \eqref{pcond1}, \eqref{pcond2} and \eqref{unifconvNh} we can conclude that there are constants $C_{\EXP}, C_P, C_Q$ so that
$$
	\begin{aligned}
	\EXP[\phat_m]&\leq C_{\EXP}\\
	|P_m^0(x)|&\leq C_P\\
	|Q^1_m(x)|&\leq C_Q \frac{1}{\|\NN\|\|\hh\|}
	\end{aligned}
$$
Therefore
\begin{equation}
	\begin{aligned}
	&\int_{\RR}|E_V(x)|dx\\
	&\qquad\leq \sum_{m=1}^MC\left(2+\frac{||\hh||^2k_2}{2}\right)\frac{C_{Q}}{\|\NN\|\|\hh\|}\int_{\RR}|\bias[\phat_m]|dx\\
	&\qquad\qquad+\frac{1}{\|\NN\|^2\|\hh\|^2}\sum_{2\leqslant|\alpha|\leqslant M}\left(\frac{1}{\|\NN\|^2\|\hh\|^2}\right)^{(|\alpha|-2)}C_{\EXP}^{(M-|\alpha|)}\\
	&\qquad\qquad+\frac{M\cdot C_P}{\|\NN\|}
	\end{aligned}
\end{equation}
This leads directly to \eqref{multiL1errvarbnd}.
\end{proof}

\subsection{$\AMISE$ formula and optimal bandwidth vector}
With the lemmas above we can derive the decomposition of $\MISE[p^*,\phat^*]$ into leading order terms and higher order terms.

\begin{theorem}\label{multimiseest}
Let hypotheses \eqref{kcond1}-\eqref{unifconvNh} hold.  Then $\MISE$ can be represented as
\begin{equation}\label{misedecomp}
\begin{aligned}
	&\MISE[p^*,\phat^*,\NN,\hh]=\AMISE[p^*,\phat^*,\NN,\hh] + \mathcal{E}(\NN,\hh) 
\end{aligned}
\end{equation}
where the leading term 
\begin{equation}\label{multiamise}
	\begin{aligned}
	\AMISE[p^*, \phat^*;\NN,\hh]&=\frac{k_2^2}{4}\int_{\RR}\left(\sum_{m=1}^{M}\left[h_m^2 p_m''(x)\prod_{\substack{k=1\\k\neq m}}^M p_k(x)\right]\right)^2\,dx+\\
	&+ \int_{\RR}\left(\sum_{m=1}^{M}\left[\frac{p_m(x)}{N_m h_m}\prod_{\substack{k=1\\k\neq m}}^M (p_k(x))^2\right]\right)dx\int_{\RR}K^2(t)\,dt
	\end{aligned}
\end{equation}
and the error term $\mathcal{E}$ satisfies
\begin{equation}\label{multiL1errmisebnd}
\mathcal{E}(\NN,\hh) = \mathcal{E}_b(\NN,\hh) + \mathcal{E}_V(\NN,\hh) =o\Big( ||\hh||^4 + \frac{1}{\|\NN\|\|\hh\|}\Big)
\end{equation}
as $\hh \to 0$, $\NN \to \infty$, and $(\|\NN\|\|\hh\|)^{-1} \to 0$.
\end{theorem}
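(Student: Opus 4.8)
The plan is to combine the two preceding lemmas through the bias--variance decomposition \eqref{multimiseprod}, which already splits $\MISE[p^*,\phat^*]$ into the integrated squared bias and the integrated variance. Both pieces have been analyzed separately, so the proof reduces to assembling the leading terms into \eqref{multiamise} and verifying that the accumulated error obeys \eqref{multiL1errmisebnd}. No new estimate is needed; the task is organizational.

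First I would invoke Lemma \ref{miltibiasformula}$(ii)$: the integrated squared bias equals the first summand of $\AMISE$ in \eqref{multiamise} plus an error $\mathcal{E}_b(\hh)$ bounded by $C\|\hh\|^5$ via \eqref{multiintbias2err}. Next I would integrate the pointwise variance formula \eqref{multivarexp} of Lemma \ref{varmulti} over $\RR$; the leading part yields exactly the second summand of $\AMISE$, with the factor $\int_\RR K^2(t)\,dt$ pulled out, while the integrated remainder $\mathcal{E}_V(\NN,\hh)=\int_\RR E_V(x)\,dx$ is controlled by \eqref{multiL1errvarbnd} as $o(\|\NN\|^{-1})$. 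Adding the two identities reproduces \eqref{misedecomp} with $\mathcal{E}=\mathcal{E}_b+\mathcal{E}_V$.

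The only remaining work is to check that $\mathcal{E}_b+\mathcal{E}_V = o\big(\|\hh\|^4 + (\|\NN\|\|\hh\|)^{-1}\big)$ in the regime $\hh\to 0$, $\NN\to\infty$, $(\|\NN\|\|\hh\|)^{-1}\to 0$. For the bias part, $\|\hh\|^5 = \|\hh\|\cdot\|\hh\|^4 = o(\|\hh\|^4)$ since $\|\hh\|\to 0$. For the variance part, writing $\|\NN\|^{-1} = \|\hh\|\cdot(\|\NN\|\|\hh\|)^{-1}$ shows $o(\|\NN\|^{-1}) = o\big((\|\NN\|\|\hh\|)^{-1}\big)$, again because $\|\hh\|\to 0$. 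Since a sum of an $o(a)$ term and an $o(b)$ term with $a,b>0$ is $o(a+b)$ (divide by $a+b$ and bound each summand by its own scale), the combined error meets the claimed bound.

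I expect this bookkeeping of asymptotic orders to be the only delicate point, the substantive estimates having been carried out in Lemmas \ref{miltibiasformula} and \ref{varmulti}. The one subtlety worth flagging is that the variance remainder is nominally only $o(\|\NN\|^{-1})$, which at first glance is a \emph{finer} scale than $(\|\NN\|\|\hh\|)^{-1}$; it is absorbed into the coarser target precisely because the bandwidths shrink, so multiplying by $\|\hh\|\to 0$ relaxes rather than tightens the requirement. Making this direction of the comparison explicit is the step most easily gotten backwards.
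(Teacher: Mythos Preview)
Your proposal is correct and follows exactly the same approach as the paper: the paper's proof is a single sentence invoking Lemma~\ref{miltibiasformula}, Lemma~\ref{varmulti}, and the bias--variance decomposition~\eqref{multimiseprod}. Your additional bookkeeping on the asymptotic orders of $\mathcal{E}_b$ and $\mathcal{E}_V$ makes explicit what the paper leaves to the reader.
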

\begin{proof}
The result follows from Lemma \ref{miltibiasformula}, Lemma \ref{varmulti}, and formula \eqref{multimiseprod}
\end{proof}

\begin{remark}\rm
We would like to note that the analysis we perform here is in spirit of the asymptotic analysis performed for multivariate kernel density estimators by Epanechnikov \cite{EP69}. However, the full-data set density  $p$ under consideration is a univariate density expressed as a product and cannot be viewed as a special case of the expansion obtained in \cite{EP69}.
\end{remark}

\begin{remark}\rm
 The asymptotically leading part derived here is the first step of our analysis. It serves  as a stepping stone for the analysis of full $\MISE$  carried out in the next section. We would like to note that one can find optimal bandwidth that minimizes $\AMISE$ for the non-normalized estimator. One has to be aware, however, that these optimal parameters would not take into account a normalization constant and, as a consequence, would be suboptimal for $\MISE$ of the normalized full data set density $\phat$.

\end{remark}

\section{Asymptotic analysis of $\MISE$ for	 $\phat$}\label{sec_mise_norm}

\subsection{Normalizing constant}

In this section we consider the error that arises when one takes into account the normalizing constant. Recall that by assumption
\begin{equation*}
p(x) \propto p^*(x) \quad \text{where} \quad  p^*(x):=\prod_{m=1}^M p_m(x)
\end{equation*}
where $p_m(x)$, $m \in \{1,\dots,M \}$ is a probability density function. Then we define
\begin{equation}\label{normconst}
  \lambda:= \int p^*(x) \, dx > 0 \quad \text{and} \quad c:=\lambda^{-1}
\end{equation}
and obtain $p(x) = c p^*(x)$. For the estimator
\begin{equation*}
   \hat{p}(x)\propto
\ds \hat{p}^*(x) \quad \text{with} \quad \hat{p}^*(x):=\prod_{m=1}^{M}\hat{p}_m(x)
\end{equation*}
we similarly define
\begin{equation}\label{normconstest}
\widehat{\lambda}:= \int \phat^*(x) \, dx > 0 \quad \text{and} \quad \widehat{c}:=\widehat{\lambda}^{-1}
\end{equation}
and hence $\phat(x) = \chat \phat^*(x)$.

\par

We are interested in the optimal bandwidth vector $\hh=(h)_{m=1}^M$ that optimizes the leading term of the mean integrated squared error
\begin{equation}\label{misefull}
\MISE(\hat{p},\, p) = \MISE(\chat\phat^*,\, c p^*)= \EXP \int_{\RR}  \big(c p^*(x) - \chat \phat ^*(x)\big)^2 \, dx\,.
\end{equation}

Observe that $\chat$ and $\phat^*$ are not independent and the previously performed analysis is not directly applicable. Moreover, we observe that the estimator of the normalizing constant
\begin{equation}\label{chatint}
\chat = \bigg(\int \prod_{i=1}^M \phat_i(x) \, dx \bigg)^{-1} < \infty
\end{equation}
may in general have an infinite expectation. This may happen because the estimators in the above product may decay too quickly in $x$ variable and the sets of $x$ with the most `mass' for each $p_i$ may have no common  intersection. This potentially may lead to small values of $\lhat$  and hence large $\chat$. To avoid this situation one would need to chose the kernel $K$ in appropriate way and establish the finiteness of the expectation of $\chat$.

\par

In this article we do not investigate this. Instead, we will show that one can replace $\MISE$ by an equivalent functional which is well-defined and finite on the whole sample space $\Omega$ and that there exists a sequence of smaller sample subspaces $\Omega_n$ with $\PP(\Omega_n) \to 1$	, on which the new functional is asymptotically {\it equivalent} to $\MISE[p,\phat]$ restricted to $\Omega_n$. We then analyze the new functional and investigate its optimal parameters.

\subsection{Preliminary estimates}

\par

\begin{lemma}[\textbf{covariance}]\label{covPStarLemma}
Let $\phat^*(x)$ be an estimator of the form \eqref{estmodel} where the vector of sample sizes $\NN(n)$ and bandwidth vector $\hh(n)$ satisfy \eqref{unifconvNh}. Then
\begin{equation}
	\Cov[\phat^*(x),\phat^*(y)]=\EXP[\phat^*(x)\phat^*(y)]-\EXP[\phat^*(x)]\EXP[\phat^*(y)]
\end{equation}
satisfies the estimates
\begin{equation}\label{covPstarEst}
\begin{aligned}
	|\Cov[\phat^*(x),\phat^*(y)]|&\leq \frac{C_{abs}}{\|\NN\|\|\hh\|}\\
	\left|\iint \Cov[\phat^*(x),\phat^*(y)]\,dxdy\right|&\leq \frac{C_{int}}{\|\NN\|}
\end{aligned}
\end{equation}
for some constants $C_{abs},C_{int}>0$ independent of $n$.

\begin{proof}
We can expand the product as follows
\[
\begin{aligned}
	&\prod_{i=1}^M \EXP[\phat_i(x)\phat_i(y)]-\prod_{i=1}^M \EXP[\phat_i(x)]\EXP[\phat_i(y)]\\
	&\qquad= \sum_{j=1}^M \left(\rule{0pt}{1em}\EXP[\phat_j(x)\phat_j(y)]-\EXP[\phat_j(x)]\EXP[\phat_j(y)]\right)\left(\prod_{i=1}^{j-1} \EXP[\phat_i(x)\phat_i(y)] \right) \left(\prod_{i=j+1}^{M} \EXP[\phat_i(x)]\EXP[\phat_i(y)]\right)
\end{aligned}
\]
where the products with the top index smaller than the bottom index should be taken as having the value one.

\par

We next observe that, according to \eqref{biasexp}, for each $i \in \{1,\dots,M\}$
\[
	|\EXP[\phat_i(x)]\EXP[\phat_i(y)]|\leq C\left(1+\frac{k_2 h_i^2}{2}+\frac{k_3 h_i^3}{6}\right)^2\,.
\]
Also Lemma \ref{expProdLemma} implies that
\[
\Big|\EXP[\phat_i(x)\phat_i(y)]\Big|\leq C\left(1+\frac{k_2 h_i^2}{2}+\frac{k_3 h_i^3}{6}\right)^2 +\frac{C}{N_i h_i} +  \frac{C}{N_i}\left(1+\left(1+\frac{k_2 h_i^2}{2}+\frac{k_3 h_i^3}{6}\right)^2\right)
\]

Then we conclude that for some $C_{\EXP} \geq 0$
\[
\begin{aligned}
	|\EXP[\phat_i(x) \phat_i(y)]|, |\EXP[\phat_i(x)]\EXP[\phat_i(y)]|&\leq C_{\EXP} < \infty\,, \quad \text{for all} \quad x,y \in \RR\,.
\end{aligned}
\]


Therefore, by Lemma \ref{expProdLemma} we obtain the estimate
\[
|\Cov[\phat^*(x),\phat^*(y)]|\leq M\,C\left(\frac{1}{\|\NN\|\|\hh\|} +\frac{1}{\|\NN\|}\left(1+\left(1+\frac{k_2 \|\hh\|^2}{2}+\frac{k_3 \|\hh\|^3}{6}\right)^2\right)\right)
\]
for some appropriate constant $C$, which gives \eqref{covPstarEst}$_1$.

The integral of $\Cov[\phat^*(x),\phat^*(y)]$ is also finite. Using the result of Lemma \ref{expProdLemma} and the hypothesis \eqref{pcond2}
\[
\begin{aligned}
&\iint \Big| \Cov\big[\phat^*(x),\phat^*(y)\big] \Big| \,dxdy\\
&\leq C_{\EXP}^{M-1}\sum_{i=1}^M \iint \left|\rule{0pt}{1em}\EXP[\phat_j(x)\phat_j(y)]-\EXP[\phat_j(x)]\EXP[\phat_j(y)]\right|\,dxdy\\
&\leq C_{\EXP}^{M-1}\sum_{i=1}^M \iint \left(\frac{1}{N_i}p_i(x)\frac{1}{h_i}K_2\left(\frac{x-y}{h_i}\right)+|E_{\Pi,i}(x,y)|\right)\,dxdy \\
&\leq C_{\EXP}^{M-1}\frac{M}{\|\NN\|}\left(2+C\left(k_1 + \frac{k_2 \|\hh\|^2}{2}+\frac{k_3 \|\hh\|^3}{6}\right)\right),\\
\end{aligned}
\]
Where at the last step we used the facts that $\frac{1}{h}K_2\left(\frac{x-y}{h}\right)$ is a probability density function in $y$ for any fixed $x$ and  $p_i(x)$ is also a probability density function.

\end{proof}
\end{lemma}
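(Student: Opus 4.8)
The plan is to reduce the covariance of the product estimator to a sum of single-subset covariances. First I would exploit that the subsets are independent: since $\phat^*(x)=\prod_{m=1}^M\phat_m(x)$ and the sample families $\{X_i^m\}_i$ are independent across $m$, both the joint and the marginal expectations factor, giving
\begin{equation*}
\EXP[\phat^*(x)\phat^*(y)]=\prod_{m=1}^M\EXP[\phat_m(x)\phat_m(y)],\qquad
\EXP[\phat^*(x)]\EXP[\phat^*(y)]=\prod_{m=1}^M\EXP[\phat_m(x)]\EXP[\phat_m(y)].
\end{equation*}
Writing $a_m=\EXP[\phat_m(x)\phat_m(y)]$ and $b_m=\EXP[\phat_m(x)]\EXP[\phat_m(y)]$, the covariance is $\prod_m a_m-\prod_m b_m$, which I would expand with the telescoping identity $\prod_m a_m-\prod_m b_m=\sum_{j=1}^M(a_j-b_j)\prod_{i<j}a_i\prod_{i>j}b_i$. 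The payoff of this identity is that each difference $a_j-b_j$ equals exactly the single-subset covariance $\Cov[\phat_j(x),\phat_j(y)]$, which carries all the smallness, while the surrounding factors only need to be controlled in magnitude.

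Next I would establish uniform $O(1)$ bounds on the surrounding factors. Using the single-subset bias expansion \eqref{biasexp} together with \eqref{pcond1}, each $\EXP[\phat_i(x)]=p_i(x)+\bias[\phat_i(x)]$ is bounded uniformly in $x$, so $b_i$ is bounded; the single-subset two-point estimate (Lemma~\ref{expProdLemma}, which I would also use for $a_j-b_j$) gives the same uniform bound for $a_i$. This yields a constant $C_{\EXP}$ with $|a_i|,|b_i|\le C_{\EXP}$, and hence
\begin{equation*}
|\Cov[\phat^*(x),\phat^*(y)]|\le C_{\EXP}^{M-1}\sum_{j=1}^M|\Cov[\phat_j(x),\phat_j(y)]|.
\end{equation*}

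It then remains to bound the single-subset covariance in two ways, and here lies the main obstacle. For the pointwise estimate \eqref{covPstarEst}$_1$ I would use the standard single-kernel calculation giving $|\Cov[\phat_j(x),\phat_j(y)]|\le C/(N_jh_j)$ and then invoke \eqref{unifconvNh}, which makes every $N_jh_j$ comparable to $\|\NN\|\|\hh\|$, producing the bound $C_{abs}/(\|\NN\|\|\hh\|)$. The integrated estimate \eqref{covPstarEst}$_2$ is the crux: one cannot merely integrate the pointwise bound, since over an unbounded domain that gives nothing better than $1/(\|\NN\|\|\hh\|)$. Instead I would expose the convolution structure of the leading term, $\Cov[\phat_j(x),\phat_j(y)]=\frac{1}{N_j}p_j(x)\frac{1}{h_j}K_2\!\big(\frac{x-y}{h_j}\big)+E_{\Pi,j}(x,y)$, where by \eqref{kcond1} the kernel $K_2$ is nonnegative and integrates to one, so $\frac{1}{h_j}K_2(\frac{x-y}{h_j})$ is a probability density in $y$. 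Integrating in $y$ first removes the factor $1/h_j$ and leaves $\frac{1}{N_j}p_j(x)$; integrating in $x$ and using $\int p_j=1$ from \eqref{pcond2} yields $O(1/N_j)=O(1/\|\NN\|)$, with the error $E_{\Pi,j}$ handled the same way. Summing over $j$ and carrying the factor $C_{\EXP}^{M-1}$ gives \eqref{covPstarEst}$_2$. The essential insight is that integrating one variable against the kernel self-convolution recovers precisely the factor of $\|\hh\|$ that separates the pointwise and integrated bounds.
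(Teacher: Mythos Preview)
Your proposal is correct and follows essentially the same route as the paper: the telescoping expansion $\prod a_m-\prod b_m=\sum_j(a_j-b_j)\prod_{i<j}a_i\prod_{i>j}b_i$, uniform $O(1)$ bounds on the surrounding factors via the bias expansion and Lemma~\ref{expProdLemma}, the pointwise single-subset bound $O(1/(N_jh_j))$, and for the integrated estimate the convolution representation $\frac{1}{N_j}p_j(x)\frac{1}{h_j}K_2\big(\frac{x-y}{h_j}\big)+E_{\Pi,j}$ with the key observation that integrating out $y$ against the probability density $\frac{1}{h_j}K_2$ recovers the missing factor of $h_j$. This is exactly the paper's argument, including the use of \eqref{unifconvNh} to pass from $N_jh_j$ to $\|\NN\|\|\hh\|$.
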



\begin{lemma}\label{lambdaVarLemma}Let $\phat^*(x)$ be an estimator of the form \eqref{estmodel} where the vector of sample sizes $\NN(n)$ and bandwidth vector $\hh(n)$ satisfy \eqref{unifconvNh}. Then following identity and the estimate holds
\[
\begin{aligned}
	\VAR[\lhat-\lambda]  = \VAR\Big[\int \phat^*(x)\, dx - \int p^*(x)\, dx \Big] \leq \frac{C_{int}}{\|\NN\|} <\infty,
\end{aligned}
\]
where $C_{int}> 0 $ is defined in  \eqref{covPstarEst}.
\begin{proof}
Since $\lambda$ is constant we have
\[
\begin{aligned}
	\VAR[\lhat-\lambda]&= \EXP\big[\lhat-\EXP[\lhat]\big]^2\\
		&=\EXP\left[\int_{\RR}\phat^*(x)-\EXP[\phat^*(x)] \, dx\right]^2\\
		&= \EXP\left[\int \left(\phat^*(x) -\EXP [\phat^*(x)]\right) dx  \cdot \int \left(\phat^*(y) -\EXP[\phat^*(y)]\right)dy\right]\\
		&= \iint \Big(\EXP \left[\phat^*(x)\phat^*(y)\right] -\EXP [\phat^*(x)]\EXP [\phat^*(y)]\Big)dx\,dy \leq \frac{C_{int}}{\|\NN\|},
\end{aligned}
\]
where the last inequality is from Lemma \ref{covPStarLemma}.
\end{proof}
\end{lemma}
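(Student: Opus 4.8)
The plan is to reduce $\VAR[\lhat-\lambda]$ to the integrated covariance of $\phat^*$ and then invoke the second estimate in \eqref{covPstarEst} from Lemma~\ref{covPStarLemma}. First, since $\lambda=\int p^*(x)\,dx$ is deterministic, subtracting it does not change the variance, so $\VAR[\lhat-\lambda]=\VAR[\lhat]=\EXP\big[(\lhat-\EXP[\lhat])^2\big]$. Recalling that $\lhat=\int\phat^*(x)\,dx$ and using linearity of expectation to write $\EXP[\lhat]=\int\EXP[\phat^*(x)]\,dx$, the centered quantity becomes $\lhat-\EXP[\lhat]=\int\big(\phat^*(x)-\EXP[\phat^*(x)]\big)\,dx$.

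Next, I would write the square of this one-dimensional integral as a product of two copies, one in $x$ and one in $y$, yielding
\begin{equation*}
\VAR[\lhat]=\EXP\left[\int\big(\phat^*(x)-\EXP[\phat^*(x)]\big)\,dx\cdot\int\big(\phat^*(y)-\EXP[\phat^*(y)]\big)\,dy\right].
\end{equation*}
The key manipulation is to interchange the expectation with the double spatial integral, after which the integrand is exactly $\EXP[\phat^*(x)\phat^*(y)]-\EXP[\phat^*(x)]\EXP[\phat^*(y)]=\Cov[\phat^*(x),\phat^*(y)]$. This gives $\VAR[\lhat]=\iint\Cov[\phat^*(x),\phat^*(y)]\,dx\,dy$, and since this quantity is nonnegative it coincides with its absolute value; applying the integrated bound \eqref{covPstarEst} then delivers $\VAR[\lhat-\lambda]\leq C_{int}/\|\NN\|<\infty$.

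The only nontrivial point, and hence the main obstacle, is justifying the interchange of expectation and the double integral (an application of Fubini's theorem). This is licensed precisely by the absolute integrability already established in the proof of Lemma~\ref{covPStarLemma}: the first estimate there provides a uniform bound $|\Cov[\phat^*(x),\phat^*(y)]|\leq C_{abs}/(\|\NN\|\|\hh\|)$, while the accompanying computation shows that $\iint|\Cov[\phat^*(x),\phat^*(y)]|\,dx\,dy$ is finite. Together with the joint measurability of $\phat^*$ as a function of the samples and the spatial variable, these bounds guarantee that the integrand is absolutely integrable with respect to the product of the probability measure and Lebesgue measure on $\RR^2$, so Fubini's theorem applies and the interchange is valid.
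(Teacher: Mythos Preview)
Your proposal is correct and follows essentially the same route as the paper: reduce $\VAR[\lhat-\lambda]$ to $\iint \Cov[\phat^*(x),\phat^*(y)]\,dx\,dy$ by writing the squared centered integral as a double integral and swapping expectation with integration, then invoke the integrated covariance bound from Lemma~\ref{covPStarLemma}. Your explicit justification of the Fubini step via the absolute integrability established in that lemma is a welcome addition that the paper leaves implicit.
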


\begin{lemma}\label{lambdaConvLemma}
	Let $\phat^*(x)$ be an estimator of the form \eqref{estmodel} where the vector of sample sizes $\NN(n)$ and bandwidth vector $\hh(n)$ satisfy
	\eqref{unifconvNh}. Then for any $\alpha \in (0,1]$
	 \begin{equation}\label{lcnvrgrate2}
	 	 \PP\bigg(
	 	   \bigg\{\omega:|\EXP{\lhat}-\lhat(\omega; \NN(n),\hh(n))| > \frac{{\lambda}}{\sqrt{2} {\|\NN\|}^{\frac{1-\alpha}{2}}}
	 	  	   \bigg\}
	 	 \bigg)
	 	     \leq \frac{2 C_{int}}{\lambda^2{\|\NN\|}^{\alpha}}\,.
	 \end{equation}
Moreover, for any $\alpha$ satisfying
\[
\max(0,1-4\alpha_0)<\alpha<1\,,
\]
where $\alpha_0$ is defined in \eqref{unifconvNh}\,, we have
\begin{equation}\label{Omegan1}
 \PP    \bigg\{
 					\Big|\frac{\lhat}{\lambda}-1\Big| > \frac{1}{\|\NN\|^{\frac{1-\alpha}{2}}}
		  	   \bigg\} \leq \frac{2 C_{int}}{\lambda^2 \|\NN\|^{\alpha}}\,.
\end{equation}
for all sufficiently large $n$.

\begin{proof} By Lemma \ref{lambdaVarLemma} and Chebyshev inequality we obtain
\[
\begin{aligned}
& \PP
		  \bigg\{\big|\lhat-\EXP[\lhat]\big|^2> \frac{\lambda^2}{2{\|\NN\|}^{1-\alpha}}
		  	   \bigg\}\\
		 & \qquad \leq \PP \bigg\{\big|\lhat-\EXP[\lhat]\big|^2> \VAR(\lhat) \frac{\lambda^2{\|\NN\|}^{\alpha}}{2 C_{int}}
		  	   \bigg\} \leq \frac{2C_{int}}{\lambda^2{\|\NN\|}^{\alpha}}\,.
\end{aligned}
\]

\par

Recall next that
\[
|\EXP(\lhat)-\lambda| = \Big| \int \Big(\EXP[\phat^*(x)]-p^*(x)\Big) \, dx \Big| \leq \int |\bias[\phat^*]| \, dx \leq C\|\hh\|^2
\]
where $C$ is independent of $\hh$. According to \eqref{unifconvNh} we have $\|\hh(n)\|\leq A \|\NN\|^{-\alpha_0}$ for some $\alpha_0 \in(0,1)$. Fix an arbitrary $\alpha$ that satisfies
\[
\max(0,1-4\alpha_0)<\alpha<1 \quad \text{so that} \quad 4\alpha_0>1-\alpha.
\]
Then
\begin{equation*}
{\|\hh\|^2}{\|\NN\|^{\frac{1-\alpha}{2}}} \leq
{A\|\NN\|^{-2\alpha_0}}{\|\NN\|^{\frac{1-\alpha}{2}}}
= A\|\NN\|^{\frac{-4\alpha_0+(1-\alpha) }{2}} \to 0 \quad \text{as $n \to \infty$}
\end{equation*}
Thus there exists $n_0$ such that
\begin{equation}\label{hbound}
C\|\hh(n)\|^2 < \frac{\lambda}{4}\|\NN(n)\|^{-\frac{(1-\alpha)}{2}} \quad \text{for all $n >n_0$}\,.
\end{equation}

By the triangle inequality we have
\[
\begin{aligned}
& \big|\lhat - \EXP \lhat \big| > \big|\lhat - \lambda  \big| - \big|\lambda  - \EXP \lhat  \big|
 > \big|\lhat - \lambda  \big| - \frac{\lambda}{4} \|\NN\|^{-\frac{(1-\alpha)}{2}}
\end{aligned}
\]
and hence for every
\begin{equation}\label{tempest1}
\omega_0 \in \Big\{\omega: \, |\lhat({\omega}) - \lambda| > \frac{\lambda}{\|\NN\|^{\frac{1-\alpha}{2}}}  \Big\}
\end{equation}
we have
\begin{equation}\label{tempest2}
\begin{aligned}
\big|\lhat(\omega_0) - \EXP \lhat \big| & > \big|\lhat(\omega_0) - \lambda  \big| - \frac{\lambda}{4} \|\NN\|^{-\frac{(1-\alpha)}{2}} > \frac{3\lambda}{4} \|\NN\|^{-\frac{(1-\alpha)}{2}}>\frac{\lambda}{\sqrt{2}} \|\NN\|^{-\frac{(1-\alpha)}{2}}\,.
\end{aligned}
\end{equation}
Then \eqref{tempest1} and \eqref{tempest2} we obtain
\begin{equation*}
\Big\{{\omega}: \, |\lhat({\omega}) - \lambda| > \frac{\lambda}{\|\NN\|^{\frac{1-\alpha}{2}}}  \Big\}	 \subset
\Big\{{\omega}: \, |\lhat({\omega}) - \EXP \lhat| > \frac{\lambda}{\sqrt{2}\|\NN\|^{\frac{1-\alpha}{2}}}  \Big\}	
\end{equation*}
and hence
\begin{equation}
\begin{aligned}
& \PP\Big\{{\omega}: \, |\lhat({\omega}) - \lambda| > \frac{\lambda}{\|\NN\|^{\frac{1-\alpha}{2}}}  \Big\} \\
&\qquad \leq \PP \Big\{{\omega}: \, |\lhat({\omega}) - \EXP \lhat | > \frac{\lambda}{\sqrt{2}\|\NN\|^{\frac{1-\alpha}{2}}}  \Big\}  \leq \frac{2 C_{int}}{\lambda^2 \|\NN\|^{\alpha}}\,.
\end{aligned}
\end{equation}
\end{proof}
\end{lemma}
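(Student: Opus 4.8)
The plan is to treat the two estimates separately: the first is a direct concentration bound, while the second requires additionally neutralizing the bias of $\lhat$.

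For \eqref{lcnvrgrate2} I would apply Chebyshev's inequality to $\lhat$ using the variance control from Lemma \ref{lambdaVarLemma}. Since $\lambda$ is deterministic, $\VAR[\lhat] = \VAR[\lhat - \lambda] \leq C_{int}/\|\NN\|$, and taking the threshold $t = \lambda/(\sqrt{2}\,\|\NN\|^{(1-\alpha)/2})$ gives $t^2 = \lambda^2/(2\|\NN\|^{1-\alpha})$, so that $\PP\{|\lhat - \EXP\lhat| > t\} \leq \VAR[\lhat]/t^2 = 2C_{int}/(\lambda^2\|\NN\|^{\alpha})$, which is exactly \eqref{lcnvrgrate2}. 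This step is routine and valid for every $\alpha \in (0,1]$.

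For \eqref{Omegan1} I would first note that the event $\{|\lhat/\lambda - 1| > \|\NN\|^{-(1-\alpha)/2}\}$ is identical to $\{|\lhat - \lambda| > \lambda\|\NN\|^{-(1-\alpha)/2}\}$, so the task becomes measuring deviation from the true mean $\lambda$ rather than from $\EXP\lhat$. The discrepancy between the two is the integrated bias $|\EXP\lhat - \lambda| = |\int (\EXP[\phat^*] - p^*)\,dx| \leq \int |\bias[\phat^*]|\,dx$, which by the $L^1$ bias estimate in Lemma \ref{miltibiasformula} is $O(\|\hh\|^2)$; combined with $\|\hh\| \leq A\|\NN\|^{-\alpha_0}$ from \eqref{unifconvNh} this bias is $O(\|\NN\|^{-2\alpha_0})$. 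The role of the restriction $\max(0,1-4\alpha_0) < \alpha < 1$ is precisely to force $2\alpha_0 > (1-\alpha)/2$, so that $\|\hh\|^2\|\NN\|^{(1-\alpha)/2} \to 0$ and therefore, for all large $n$, the bias is smaller than $\tfrac{\lambda}{4}\|\NN\|^{-(1-\alpha)/2}$. With the bias dominated by the fluctuation scale, a triangle inequality shows that $|\lhat - \lambda| > \lambda\|\NN\|^{-(1-\alpha)/2}$ forces $|\lhat - \EXP\lhat| > \tfrac{3\lambda}{4}\|\NN\|^{-(1-\alpha)/2} > \tfrac{\lambda}{\sqrt{2}}\|\NN\|^{-(1-\alpha)/2}$, using $3/4 > 1/\sqrt{2}$. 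This set inclusion then lets me invoke \eqref{lcnvrgrate2} to conclude \eqref{Omegan1}.

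I expect the main obstacle to be the bias bookkeeping in the second part rather than the concentration itself: one must verify that the deterministic bias is genuinely negligible against the random fluctuation scale, which only holds for large $n$ and only on the restricted range of $\alpha$ tied to the bandwidth decay rate $\alpha_0$. The delicate point is lining up the constants — reserving a $\lambda/4$ margin for the bias so that the residual $3\lambda/4$ threshold still exceeds the $\lambda/\sqrt{2}$ level at which the concentration bound \eqref{lcnvrgrate2} was established.
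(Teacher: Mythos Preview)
Your proposal is correct and follows essentially the same route as the paper's proof: Chebyshev with the variance bound from Lemma~\ref{lambdaVarLemma} for \eqref{lcnvrgrate2}, then the $L^1$ bias estimate $|\EXP\lhat-\lambda|\leq C\|\hh\|^2$ combined with the condition $4\alpha_0>1-\alpha$ to absorb the bias into a $\lambda/4$ margin, and finally the triangle-inequality set inclusion using $3/4>1/\sqrt{2}$ to reduce \eqref{Omegan1} to \eqref{lcnvrgrate2}. The constants and thresholds you chose match the paper's exactly.
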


\subsection{Functional equivalent to $\MISE$}

As it was pointed the functional $\MISE$ defined in \eqref{misefull} is not defined in the whole space $\Omega$ because the reciprocal of the renormalization random variable $(\lhat)^{-1}$ may in general have en infinite expectation.

The event space $\Omega_n$ insures that the constant $\chat$ has a finite expectation and stays close to the true normalization constant $c$. However, even on this smaller and safer space the functional $\MISE[\phat,p]$ is rather difficult to analyze. To help resolve this issue we introduce a functional that is asymptotically equivalent to $\MISE$ on the space $\Omega_n$
\begin{definition}
\begin{equation}\label{misebar}
	\overline{\MISE}=\EXP\left[\left(\frac{\lhat}{\lambda}\right)^2\int_{\RR}(\phat(x)-p(x))^2dx\,\right]
\end{equation}
\end{definition}
The equivalence follows from the definition of the space $\Omega_n$
\begin{proposition}
The functional ${\overline{\MISE}}$ is asymptotically equivalent to $\MISE$ on smaller events $\Omega_n$ uniformly in $n$, that is
\begin{equation}\label{proplimit}
\lim_{\|\NN(n)\| \to \infty} \frac{\overline{\MISE}[p(x),\phat(x;\omega)|\omega\in\Omega_n\big]}{\MISE[p(x),\phat(x;\omega)|\omega\in\Omega_n\big]}=1
\end{equation}
where 
\begin{equation}\label{Omegan}
\Omega_{n} = \bigg\{\omega \in \Omega:\,
 					|\lhat-\lambda| \leq \frac{\lambda}{\|\NN\|^{\frac{1-\alpha}{2}}}
		  	   \bigg\} \quad \text{with} \quad \PP(\Omega_n) \geq 1 - \frac{C}{\|\NN\|^{\alpha}}
\end{equation}
and $\alpha$ is a fixed constant satisfying $1>\alpha>\min(1-4\alpha_0,0)$.


\begin{proof}

	Observe that
	\[
	\begin{aligned}
		&\overline{\MISE}[p(x),\phat(x)|\Omega_n\big]=\frac{1}{\PP(\Omega_n)}\int_{\Omega_n}\left(\frac{\lhat}{\lambda}\right)^2\int_{\RR}(\phat(x, \omega)-p(x))^2dx\,\PP(d\omega)\\
		&\qquad = \frac{1}{\PP(\Omega_n)}\bigg(\int_{\Omega_n}\left[\left(\frac{\lhat}{\lambda}-1\right)^2+2\left(\frac{\lhat}{\lambda}-1\right)+1\right]\int_{\RR}(\phat(x, \omega)-p(x))^2dx\,\bigg)\PP(d\omega).
	\end{aligned}	
	\]
Then by \eqref{Omegan} we obtain that
	\[
		\overline{\MISE}[p(x),\phat(x)|\Omega_n\big]=(1+\eps(n))\MISE[p(x),\phat(x)|\Omega_n\big]
	\]
	where
	\[
	|\eps(n)|\leq\frac{C}{\|\NN\|^{\frac{1-\alpha}{2}}},
	\]
	for some constant $C>0$ independent of $n$. This implies \eqref{proplimit}.
\end{proof}
\end{proposition}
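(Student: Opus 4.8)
The plan is to compare the two functionals directly on the conditioning event $\Omega_n$, exploiting the fact that the very definition of $\Omega_n$ furnishes a pointwise bound on the ratio $\lhat/\lambda$. First I would write out the conditional functionals explicitly as
\[
\MISEB[p,\phat\,|\,\Omega_n] = \frac{1}{\PP(\Omega_n)}\int_{\Omega_n}\left(\frac{\lhat}{\lambda}\right)^2 I(\omega)\,\PP(d\omega),\qquad \MISE[p,\phat\,|\,\Omega_n] = \frac{1}{\PP(\Omega_n)}\int_{\Omega_n} I(\omega)\,\PP(d\omega),
\]
where $I(\omega):=\int_{\RR}(\phat(x;\omega)-p(x))^2\,dx\geq 0$. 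The common prefactor $\PP(\Omega_n)^{-1}$ cancels in the ratio \eqref{proplimit}, so only the integrals over $\Omega_n$ matter; the probability estimate $\PP(\Omega_n)\geq 1-C\|\NN\|^{-\alpha}$ recorded in the statement is therefore not needed for the ratio itself, and follows directly from Lemma \ref{lambdaConvLemma}, estimate \eqref{Omegan1}.

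Second, I would use the elementary identity $(\lhat/\lambda)^2 = 1 + 2(\lhat/\lambda-1) + (\lhat/\lambda-1)^2$ to split the numerator, writing
\[
\MISEB[p,\phat\,|\,\Omega_n] = \MISE[p,\phat\,|\,\Omega_n] + R(n),\qquad R(n) = \frac{1}{\PP(\Omega_n)}\int_{\Omega_n}\left[2\left(\frac{\lhat}{\lambda}-1\right)+\left(\frac{\lhat}{\lambda}-1\right)^2\right] I(\omega)\,\PP(d\omega).
\]
The crux is the pointwise bound that holds by construction of $\Omega_n$: for every $\omega\in\Omega_n$ one has $|\lhat/\lambda-1| = |\lhat-\lambda|/\lambda \leq \|\NN\|^{-(1-\alpha)/2}$. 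Since $\alpha<1$ makes the exponent $(1-\alpha)/2$ positive, the bracketed factor is bounded uniformly in $\omega$ by $2\|\NN\|^{-(1-\alpha)/2}+\|\NN\|^{-(1-\alpha)}\leq C\|\NN\|^{-(1-\alpha)/2}$ for all large $n$.

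Third, because $I(\omega)\geq 0$, I would pull this uniform bound out of the integral, obtaining $|R(n)|\leq C\|\NN\|^{-(1-\alpha)/2}\,\MISE[p,\phat\,|\,\Omega_n]$, and hence
\[
\MISEB[p,\phat\,|\,\Omega_n] = (1+\eps(n))\,\MISE[p,\phat\,|\,\Omega_n],\qquad |\eps(n)|\leq C\|\NN\|^{-(1-\alpha)/2}\to 0.
\]
Dividing and sending $\|\NN\|\to\infty$ yields \eqref{proplimit}.

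The only genuine subtlety, and the step I would be most careful about, is legitimizing the factorization $(1+\eps(n))$ and the division in \eqref{proplimit}: this requires the denominator $\MISE[p,\phat\,|\,\Omega_n]$ to be strictly positive, which holds since $\phat\neq p$ on a subset of $\Omega_n$ of positive probability, so $I(\omega)>0$ there. A secondary point is that the constant $C$ controlling $\eps(n)$ must be independent of $n$; this is immediate, as it arises only from the fixed numerical coefficients in the expansion of $(\lhat/\lambda)^2$ together with the uniform defining inequality of $\Omega_n$. Everything else amounts to a first-order expansion of $(\lhat/\lambda)^2$ about $1$, controlled uniformly on $\Omega_n$, and is routine.
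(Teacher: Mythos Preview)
Your proposal is correct and follows essentially the same route as the paper: expand $(\lhat/\lambda)^2$ about $1$, use the defining pointwise bound of $\Omega_n$ to control the cross and square terms uniformly, and extract the factor $(1+\eps(n))$ with $|\eps(n)|\leq C\|\NN\|^{-(1-\alpha)/2}$. Your write-up is in fact slightly more careful than the paper's, since you make explicit the positivity of the denominator needed for the division and the $n$-independence of the constant.
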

One of the positive side effects we must mention is that the functional defined in \eqref{misebar} is not only easier to analyze but also it is defined throughout the whole space $\Omega$. We take advantage of this fact and continue the discussion with expectations taken over the whole unrestricted space.

With the slight modification of the functional we can now extract the leading order part
\begin{theorem}\label{AMISEplmm}
The distance functional $\overline{\MISE}$ can be represented as
\begin{equation}\label{misebdecomp}
\begin{aligned}
	&\overline{\MISE}[p,\phat,\NN,\hh]=\overline{\AMISE}[p,\phat,\NN,\hh] + \mathcal{E}(\NN,\hh)
\end{aligned}
\end{equation}
where the leading term 
\begin{equation}\label{amiseb}
\begin{aligned}
	\overline{\AMISE}[p,\phat,\NN,\hh]&:=\left(\int B(x) \,dx\right)^2\int\big(c p^*(x)\big)^2\,dx\\
	&\qquad+ \int\left(B(x) \right)^2dx+\int_{\RR}\left(V(x)\right)dx\int_{\RR}K^2(t)\,dt \\
	&\qquad- 2\iint B(y)\, B(x)\,cp^*(x)\, dx\,dy\\
	B(x)&=\frac{ck_2}{2}  \sum_{m=1}^{M}\left[h_m^2 p_m''(x)\prod_{\substack{k=1\\k\neq m}}^M p_k(x)\right]\\
	V(x)&=\sum_{m=1}^{M}\left[\frac{p_m}{N_m h_m}\prod_{\substack{k=1\\ k \neq m }}^M p_k^2(x)\right]\\
\end{aligned}
\end{equation}
and the error term $\mathcal{E}$ satisfies
\begin{equation}
\mathcal{E}(\NN,\hh) = o\Big( ||\hh||^4 + \frac{1}{\|\NN\|\|\hh\|}\Big)
\end{equation}
as $\hh \to 0$, $\NN \to \infty$, and $(\|\NN\|\|\hh\|)^{-1} \to 0$.

\begin{proof}
We can divide the functional $\overline{\MISE}$ into three components
\begin{equation}\label{decomp}
\begin{aligned}
	\overline{\MISE}[p,\phat]&=J_1 + J_2 + J_3\\
		&=c^2\EXP[(\lambda-\lhat)^2]\int_{\RR}\left(p(x)\right)^2dx\\
		&\qquad + c^2\EXP\int_{\RR}(\phat^* - p^*)^2dx\\
		&\qquad - 2 c^2\EXP \int_{\RR}(\lhat-\lambda)(\phat^*-p^*) p(x)\,dx.
\end{aligned}	
\end{equation}

\par\smallskip

Our first step will be to express each term $J_i$, i=1,\dots,3, as a sum of a higher order term and the term containing a bias, variance or their combination. We then will use the results of the previous section and the appendix to obtain a leading part of each term.

First observe that
\[
\begin{aligned}
\EXP[(\lambda-\lhat)^2]&=\EXP[(\lambda-\EXP[\lhat])^2]+\EXP[(\EXP[\lhat]-\lhat)^2]\\
& = \bigg(\EXP\bigg[\int p(x)- \phat(x) \, dx \bigg]\bigg) ^2 + \EXP[(\EXP[\lhat]-\lhat)^2].
\end{aligned}
\]
The second term turns out to be of higher order. This can be seen from the following estimate
\begin{equation}
\begin{aligned}
	&\EXP[(\lhat-\EXP[\lhat])^2]=\EXP\left[\int \left(\phat^* -\EXP \phat^*\right)dx \right]^2\\
	&\qquad = \EXP\left[\int \left(\phat^* -\EXP \phat^*\right)dx \cdot \int \left(\phat^* -\EXP \phat^*\right)dx\right]\\
	&\qquad = \EXP\left[\int \left(\phat^*(x) -\EXP \phat^*(x)\right) dx \cdot \int \left( \phat^*(y) -\EXP \phat^*(y)\right)dy\right]\\
	&\qquad = \iint \left(\EXP\left[\phat^*(x)\phat^*(y)\right] -\EXP [\phat^*(x)]\EXP [\phat^*(y)]\right)dx\,dy\leq\frac{C_1}{\|\NN\|}
\end{aligned}
\end{equation}
where the last inequality follows from Lemma \ref{covPStarLemma}.

Thus, we conclude
\[
	J_1=c^2 \left(\int \bias[p^*,\phat^*]\,dx\right)^2\int\big(p(x)\big)^2\,dx +E_1\quad\text{where } |E_1|\leq \frac{C}{\|\NN\|}.
\]

From \eqref{multimiseprod} we have that
\[
	J_2 =  c^2 \int \bigg(\bias^2[p^*,\phat^*]+\VAR[\phat^*] \bigg)\,dx.
\]

The term $J_3$ can be expressed as
\[
\begin{aligned}
	&J_3 = c^2\EXP_n\iint \big(\phat^*(y)-p^*(y)\big)\Big(\big(\phat^*(x)-p^*(x)\big) p(x) \Big)\, dy dx\\
	&\qquad=  c^2\iint \bias[\phat^*(y)]\bias[\phat^*(x)]\,p(x)\, dx\,dy\\
	&\qquad\qquad + c^2\EXP \iint  \big(\EXP[\phat^*(y)]-\phat^*(y)\big)\big(\EXP[\phat^*(x)]-\phat^*(x)\big) p(x) \, dy dx
\end{aligned}
\]
Since $p^*(x)$ is uniformly bounded, Lemma \ref{covPStarLemma} implies that the last term in the above identity satisfies
\[
	\left|c^2\EXP \iint  \big(\EXP[\phat^*(y)]-\phat^*(y)\big)\big(\EXP[\phat^*(x)]-\phat^*(x)\big) c p^*(x) \, dy dx\right|\leq \frac{C}{\|\NN\|}.
\]
This gives
\[
\begin{aligned}
	&J_3 =  c^2\iint \bias[\phat^*(y)]\bias[\phat^*(x)]\,p(x)\, dx\,dy + E_3, \quad |E_3|<\frac{1}{\|\NN\|}\,.
\end{aligned}
\]

Combining the above estimates gives
\begin{equation}\label{tempdecom}
\begin{aligned}
\overline{\MISE}\big[p,\phat\big] &=  c^2 \left(\int \bias[p^*,\phat^*]\,dx\right)^2\int\big(p(x)\big)^2\,dx\\
	&\qquad + c^2 \int\bias^2[p^*,\phat^*]+\VAR[\phat^*]\,dx\\
	&\qquad -2 c^2\iint \bias[\phat^*(y)]\bias[\phat^*(x)]\,p(x)\, dx\,dy + E_M, \quad |E_M|\leq\frac{C}{\|\NN\|}
\end{aligned}
\end{equation}

Applying the results of Lemma \ref{miltibiasformula} and Lemma \ref{varmulti} to the  identity \eqref{tempdecom} leads to \eqref{misebdecomp} and \eqref{amiseb} and this finishes the proof.

\end{proof}
\end{theorem}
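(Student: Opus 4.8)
The plan is to expand $\MISEB$ algebraically, reduce it to quantities already controlled by Lemma~\ref{miltibiasformula} and Lemma~\ref{varmulti}, and use the covariance estimates \eqref{covPstarEst} to absorb every term in which the normalizing fluctuation $\lhat-\lambda$ is coupled to $\phat^*$. First I would substitute $\phat=\lhat^{-1}\phat^*$ and $p=\lambda^{-1}p^*$ into the definition \eqref{misebar}. A direct computation gives $(\lhat/\lambda)^2(\phat-p)^2=c^2(\phat^*-\lhat\SP p)^2$, and since $p^*=\lambda p$ one may write $\phat^*-\lhat\SP p=(\phat^*-p^*)+(\lambda-\lhat)p$. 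Squaring and taking expectations produces the three–term split $\MISEB=J_1+J_2+J_3$, where $J_1=c^2\EXP[(\lambda-\lhat)^2]\int p^2\,dx$ collects the squared normalizing error, $J_2=c^2\MISE[p^*,\phat^*]$ is the already–analyzed product functional, and $J_3=-2c^2\EXP\int(\lhat-\lambda)(\phat^*-p^*)p\,dx$ is the cross term. This step is purely algebraic.

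Next I would treat each $J_i$ by peeling the deterministic (bias) part off the random fluctuation. For $J_1$ the identity $\EXP[(\lambda-\lhat)^2]=(\lambda-\EXP\lhat)^2+\VAR[\lhat]$ isolates the squared integrated bias, and Lemma~\ref{lambdaVarLemma} (a consequence of \eqref{covPstarEst}) shows $\VAR[\lhat]=O(1/\|\NN\|)$, so only the squared integrated bias survives to leading order. For $J_3$ I would write $\lambda-\lhat=\int(p^*(y)-\phat^*(y))\,dy$ and split $\phat^*-p^*=\bias[\phat^*]+(\phat^*-\EXP\phat^*)$ in both the $x$ and $y$ slots; upon taking expectations the mixed bias–fluctuation terms vanish, leaving a product of biases plus $\iint\Cov[\phat^*(x),\phat^*(y)]\,p(x)\,dx\,dy$, and the latter is $O(1/\|\NN\|)$ by \eqref{covPstarEst} because $p$ is bounded. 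The term $J_2$ needs nothing new, being exactly the bias–variance identity \eqref{multimiseprod}.

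With these reductions in place I would substitute the expansions of Lemma~\ref{miltibiasformula} and Lemma~\ref{varmulti}, using that $c$ times the leading bias of $\phat^*$ equals $B(x)$. The bound $\int|\bias[\phat^*]|\,dx=O(\|\hh\|^2)$ together with the $L^1$ and $L^2$ error estimates \eqref{multibiaserrbound}--\eqref{multiintbias2err} turns each product of biases into the $B$–quadratic forms of \eqref{amiseb} with remainder $O(\|\hh\|^5)$, while Lemma~\ref{varmulti} supplies the $V$–term with remainder $o(1/\|\NN\|)$. Collecting, every discarded piece is either $O(\|\hh\|^5)=o(\|\hh\|^4)$ or $O(1/\|\NN\|)$; the latter is $o(1/(\|\NN\|\|\hh\|))$ since $\|\hh\|\to 0$. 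This yields the representation \eqref{misebdecomp}--\eqref{amiseb} with the asserted error order.

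The genuine difficulty—and the reason the single–machine ($M=1$) analysis does not transfer verbatim—is that $\lhat$ and $\phat^*$ are not independent, since $\lhat=\int\phat^*$, so neither $J_1$ nor $J_3$ factors through independence. The whole argument hinges on showing that the random fluctuations of $\lhat$ enter only at order $1/\|\NN\|$, strictly below both leading scales $\|\hh\|^4$ and $1/(\|\NN\|\|\hh\|)$, and this is precisely what \eqref{covPstarEst} delivers. I expect the technical crux to be verifying that the pointwise covariance bound $O(1/(\|\NN\|\|\hh\|))$ integrates to the sharper $O(1/\|\NN\|)$ bound, since it is this gain of a factor $\|\hh\|$ upon integration that keeps the normalization error genuinely subleading.
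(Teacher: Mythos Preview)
Your proposal is correct and follows essentially the same route as the paper: the same three-term algebraic split $\MISEB=J_1+J_2+J_3$, the same bias--variance peeling of $J_1$ and $J_3$ with the covariance estimates \eqref{covPstarEst} (equivalently Lemma~\ref{lambdaVarLemma}) absorbing the fluctuation pieces at order $1/\|\NN\|$, and the same final substitution of Lemmas~\ref{miltibiasformula} and~\ref{varmulti}. Your derivation of the decomposition via $(\lhat/\lambda)^2(\phat-p)^2=c^2\big((\phat^*-p^*)+(\lambda-\lhat)p\big)^2$ is in fact more explicit than the paper's, which simply states \eqref{decomp}.
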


\subsection{Numerical optimization scheme for optimal bandwidth}\label{numericalscheme}
In the absence of knowledge of probability density functions $p(x)$ and $p_m(x)$, it may seem that the formula \eqref{amiseb} has little practical use. However, we can replace the densities with their approximations $\phat(x)$ and $\phat_m(x)$. This will turn the quantity $\AMISEB$ into a function of the form
\begin{equation}\label{amisebhat}
{\AMISEB}(\hh)=\sum_{i,j=1}^M h_i^2h_j^2 \beta_{i,j} + \sum_{i=1}^M\frac{\nu_i}{h_i}
\end{equation}
where
\begin{equation}\label{amisebcoeffs}
\begin{aligned}
\beta_{i,j}&=\frac{\chat^2k^2_2}{4}  \int_\RR \phat_i''(x)\prod_{\substack{k=1\\k\neq i}}^M \phat_k(x)\,dx \int_\RR \phat_j''(x)\prod_{\substack{k=1\\k\neq j}}^M \phat_k(x)\,dx\int_\RR \phat^2(x)\,dx\\
&\qquad + \frac{\chat^2k^2_2}{4}\int_\RR \left(\phat_i''(x)\prod_{\substack{k=1\\k\neq i}}^M \phat_k(x)\right)\left( \phat_j''(x)\prod_{\substack{k=1\\k\neq j}}^M \phat_k(x)\right)dx\\
&\qquad - \frac{\chat^2k^2_2}{2}\int_\RR \phat_i''(y)\prod_{\substack{k=1\\k\neq i}}^M \phat_k(y)\,dy \int_\RR \phat_j''(x)\prod_{\substack{k=1\\k\neq j}}^M \phat_k(x) \phat(x)\,dx\\
\nu_i&=\int_\RR\frac{\phat_i}{N_i}\prod_{\substack{k=1\\ k \neq i }}^M \phat_k^2(x)\,dx\int_{\RR}K^2(t)\,dt
\end{aligned}
\end{equation}
This can be used to create an iterative algorithm that will yield a near optimal value for $\hh$. A possible implementation of such an algorithm is laid out in Algorithm \ref{minhalg}.

\begin{algorithm}
\caption{Locate optimal bandwidth vector $\hh=(h_1,\ldots,h_M)$}\label{minhalg}
\mbox{\textbf{input:} Samples $X_{i,m}$}\\
\mbox{\textbf{result:} Vector $\hh=(h_1,h_2,\ldots,h_M)$}
\begin{algorithmic}[1]
\State Initialize $k=0$, $\hh_0$ with $h_i=\hopt$ from \eqref{hoptbandnorm}
\Repeat
\State Increment $k$
\State Compute estimators $\phat_m$, $\phat^*$ and $\chat$
\State Compute $\beta_{i,j}$ and $\nu_i$ from \eqref{amisebcoeffs}
\State  Compute $\hh_{k+1}$ by executing a few steps of the gradient (or conjugate gradient) descent minimization algorithm applied to   $\AMISEB(\hh)$
\Until {$\|\hh_k-\hh_{k+1}\|$ is sufficiently small}
\end{algorithmic}
\end{algorithm}


The conditions, under which the iterative algorithm \ref{minhalg} will converge to the minimizing vector $\HOPT$, need to be thoroughly investigated. Such investigation is outside the limits of this publication and is one of the directions of the future work the authors consider.

\section{Examples}\label{examplesSec}
In a general setting, finding a bandwidth vector $\hh$ that minimizes \eqref{amiseb} would require solving a system of nonlinear equations, which would probably not have a closed form solution and require application of numerical methods.
In this section we discuss two special cases, for which closed form solutions can be obtained with relative ease.\

\subsection{$\overline{\AMISE}$ optimization for a symmetric case}
In this case we assume that all posterior densities for each subset of samples are the same, and that all subsets contain the same number of samples. In other words we employ the following assumptions
\begin{itemize}
	\item $p_1(x)=p_2(x)=\dots=p_M(x) = f(x)$.
	\item $N_1=N_2=\dots=N_M$, that is, $\NN=(n,n,\dots,n)$, for some $n \in \mathbb{N}$
\end{itemize}
In view of the symmetry, all components of the optimal bandwidth vector should be the same, that is $\hh=(h,h,\dots,h)$. Under these assumptions, the expression for $\overline{\AMISE}$ simplifies into
\begin{equation}\label{amisesymm}
\begin{aligned}
	&\overline{\AMISE}[p(x),\phat(x)|\NN,\hh]:=M^2\frac{c^2k^2_2 h^4}{4}  \left(\int p_1''(x)p_1^{M-1}(x)\,dx\right)^2\int\big(c p_1^M(x)\big)^2\,dx\\
	&\qquad + M^2\frac{c^2k^2_2h^4}{4} \int\left(p_1''(x)p_1^{M-1}(x)\right)^2dx+M\int_{\RR}\left(\frac{p_1^{2M-1}}{n h}\right)dx\int_{\RR}K^2(t)\,dt\\
	&\qquad - M^2\frac{c^3k^2_2h^4}{2} \iint \left(p_1''(y) p_1^{M-1}(y)\right)\left(p_1''(x) p_1^{2M-1}(x)\right)\, dx\,dy\\
\end{aligned}
\end{equation}
This expression achieves its minimum when $h=\HOPT$ where
\begin{equation}\label{hoptsym}
\HOPT={(4 n)^{-1/5}} \bigg(\frac{B(M)}{A(M)} \bigg)^{1/5} \\
\end{equation}
and the constants $A$ and $B$ are given by
\begin{equation}\label{ABdef}
\begin{aligned}
A(M)&=M\frac{c^2k^2_2}{4}  \Bigg[\left(\int_\RR p_1''(x)p_1^{M-1}(x)\,dx\right)^2\int_\RR\big(c p_1^M(x)\big)^2\,dx\\
&\qquad+ \int_\RR\left(p_1''(x)p_1^{M-1}(x)\right)^2dx-2c\iint_{\RR^2} \left(p_1''(y) p_1^{M-1}(y)\right)\left(p_1''(x) p_1^{2M-1}(x)\right)\, dx\,dy\Bigg]\\
B(M)&=c^2\int_{\RR}\left({p_1^{2M-1}}\right)dx\int_{\RR}K^2(t)\,dt.\\
\end{aligned}
\end{equation}
Forming the bandwidth vector $\HOPTAM=(\HOPT,\HOPT,\dots,\HOPT)$, should yield a smaller value for $\overline{\AMISE}$ than the one achieved with the conventional choice given in \eqref{introhoptimal1d}.

\subsection{$\overline{\AMISE}$ optimization for normal subset posterior densities}


Let us assume that all subsets of samples of $x$ satisfy
\begin{itemize}
	\item $p_m=\mathcal{N}(x,\mu,\sigma)$ is a normal distribution with the same mean and standard deviation for each $m=1,\ldots,M$
	\item $N_1=N_2=\dots=N_M$, that is, $\NN=(n,n,\dots,n)$, for some $n \in \mathbb{N}$.
\end{itemize}
Again, using symmetry argument, we look for the minimizer on the set of positive vectors $\hh=(h,h,\dots,h)$. In that case, the optimal $h=\hopt$ is computed by \eqref{hoptsym} where constants $A$ and $B$ are computed  by \eqref{ABdef} with $p_1(x)$ replaced by $\mathcal{N}(x,\mu,\sigma)$. This gives
\[
A(M)=\frac{3 }{32\pi^{1/2}M^{1/2}\sigma^5 }
\]
and
\[
B(M)=\frac{M}{ 2\pi^{1/2}\sqrt{2 M-1}}
\]
and hence the minimizer of the leading part is given by 
\begin{equation}\label{hoptbandnorm}
 {\hh}^{\rm opt}=(1,1,\dots,1)h^{\rm opt} \quad \text{with} \quad
 	\HOPT= \bigg( \frac{16}{9} \frac{M^{3}}{(2 M-1)} \bigg)^{1/10}\sigma n^{-1/5}\,.
 \end{equation}

Recall that $n$ is the number of samples that each subset contains and hence  the total number of samples for all subsets is given by $\|\NN\|_1=n\cdot M$. Thus, letting $M \to \infty$ we obtain
\begin{equation}\label{asybandnorm}
	\HOPT=
	\left((8/9)^{1/10}+O(M^{-1})\right) (n\, M)^{-1/5}\sigma\qquad \text{as\quad}M\to\infty.
\end{equation}

Setting $M=1$ in \eqref{hoptbandnorm} we once again obtain the bandwidth vector 
\begin{equation}\label{hepanechnikov}
\hh_0^{\rm opt}=(1,1,\dots) h^{\rm opt}_{M=1} \quad \text{with}	\quad \hopt_{M=1}=\left(\frac{4}{3}\right)^{1/5}\sigma n^{-1/5}
\end{equation}
where each component $\hopt_{M=1}$ is the optimal bandwidth parameter for the individual subset posterior density estimator. Thus the `intuitive' choice of the bandwidth vector as $\hoptv_0$ leads to a suboptimal approximation of $\phat(x)$.

\subsection{$\overline{\AMISE}$ optimization for gamma distributed subset posterior densities}


Let us assume that all subsets of samples of $x$ satisfy
\begin{itemize}
	\item $p_m=\Gamma(x,\alpha,\beta)$ is a gamma distribution where $\alpha$ and $\beta$ are the same for each $m=1,\dots,M$. 
	\item $N_1=N_2=\dots=N_M$, that is, $\NN=(n,n,\dots,n)$, for some $n \in \mathbb{N}$.
\end{itemize}
By symmetry argument, we look for the minimizer on the set of positive vectors $\hh=(h,h,\dots,h)$. By substituting $p_1(x)$ by $\Gamma(x,\alpha,\beta)$ in \eqref{hoptsym} and \eqref{ABdef} we can obtain formulas similar to the ones derived in the previous section. Evaluating the integrals is not very challenging, however the integration results in very bulky expressions. 
\begin{equation}\label{hoptgamma}
\begin{aligned}
h(n,M,\alpha)&=\frac{1}{(4n^2\pi)^{1/10}}\left(\frac{A}{B+C+D}\right)^{1/5}\\
A&=2^{2 ({\alpha}-1) M} (2 M-1)^{-2 {\alpha} M+{\alpha}+2 M-2} \Gamma ({\alpha}) \left(\frac{M}{\theta }\right)^{3 ({\alpha}-1) M-1} \\
&\qquad\times\theta ^{3 {\alpha} M-2 M+4} \Gamma (2 M {\alpha}-{\alpha}-2M+2)\\[3pt]
B&=\frac{({\alpha}-1)^2 (M-1)^2 M^2 \left(\frac{M}{\theta }\right)^{({\alpha}-1) M} \theta ^{{\alpha} M} \Gamma (2 ({\alpha}-1) M) \Gamma (({\alpha}-1) M-1)^2}{\Gamma (({\alpha}-1)M+1)^2}\\[3pt]
C&=2 (M ({\alpha} (4 (M-1) M+3)-4 (M-1) M-15)+9) \left(\frac{M}{\theta }\right)^{({\alpha}-1) M} \\
&\qquad\times\theta ^{{\alpha} M} \Gamma (2 ({\alpha}-1) M-3)\\[3pt]
D&=\frac{2 ({\alpha}-1) (M-1) (2 M-1) M^{({\alpha}-1) M+1} \theta ^M \Gamma (({\alpha}-1) M-1) \Gamma (2 ({\alpha}-1) M-1)}{\Gamma (({\alpha}-1) M+1)}
\end{aligned}
\end{equation}
It must be noted that this result is very different from the normal distribution one ,and the suggested values of $h$ are approximately thirty percent smaller than those in case of normal distribution even if the standard deviation of the samples are the same. This further necessitates the need for an easy-to-apply method for numerical approximation of the bandwidth vector $\hh$, as the KDE method even for very similar families of distributions (such as normal and gamma ones) achieves best performance for very different bandwidth values. We discussed one such possible numerical scheme in section \ref{numericalscheme}.

%
\subsection{Numerical experiments with normal subset posterior densities}

\subsubsection{Description of the experiment}
The numerical experiment is designed to investigate the location of the optimal bandwidth parameter by approximating the true value of $\MISE[p,\phat]$ by repeated simulation.  One iteration of the experiment generates $M$ subsets of a predetermined number of samples with $p_m=\mathcal{N}(x,0,1)$, $m=1,\ldots,M$. Then the approximation $\phat(x)$ is computed several times with varied bandwidth parameters $h$ and integrated square error $\mathrm{ISE}[p(x),\phat(x),h]$ is then computed via numerical integration. The iteration is repeated a thousand times to obtain an approximation of $\MISE[p(x),\phat(x),h]$ and its standard deviation. This process is repeated for varying sample sizes and numbers of subsets. 

Once the data is collected, the minimum of $\MISE[p(x),\phat(x),h]$ is located and the bandwidth parameter $h$ for which the minimum is obtained is recorded. Since $h$ computed this way is a random variable, the whole experiment is repeated a hundred times to compute the approximation of the expected value of $h$ that minimizes $\MISE[p(x),\phat(x),h]$ and its variance. 
\subsubsection{Numerical results}
The experiments we ran allow us to compare the behavior of $\MISE[p(x),\phat(x),\hh]$ when we select $\hh=\hoptv_0$ from \eqref{hepanechnikov} and when we select $\hh=\hoptv$ from \eqref{hoptbandnorm}. Figures \ref{figerrplot4} and \ref{figerrplot8} demonstrate that the latter choice is clearly a superior one.
%
%
%
 \begin{figure}[htb]
 \centering
	\begin{subfigure}[t]{0.4\textwidth}
		\centering
		\includegraphics[width=\textwidth]{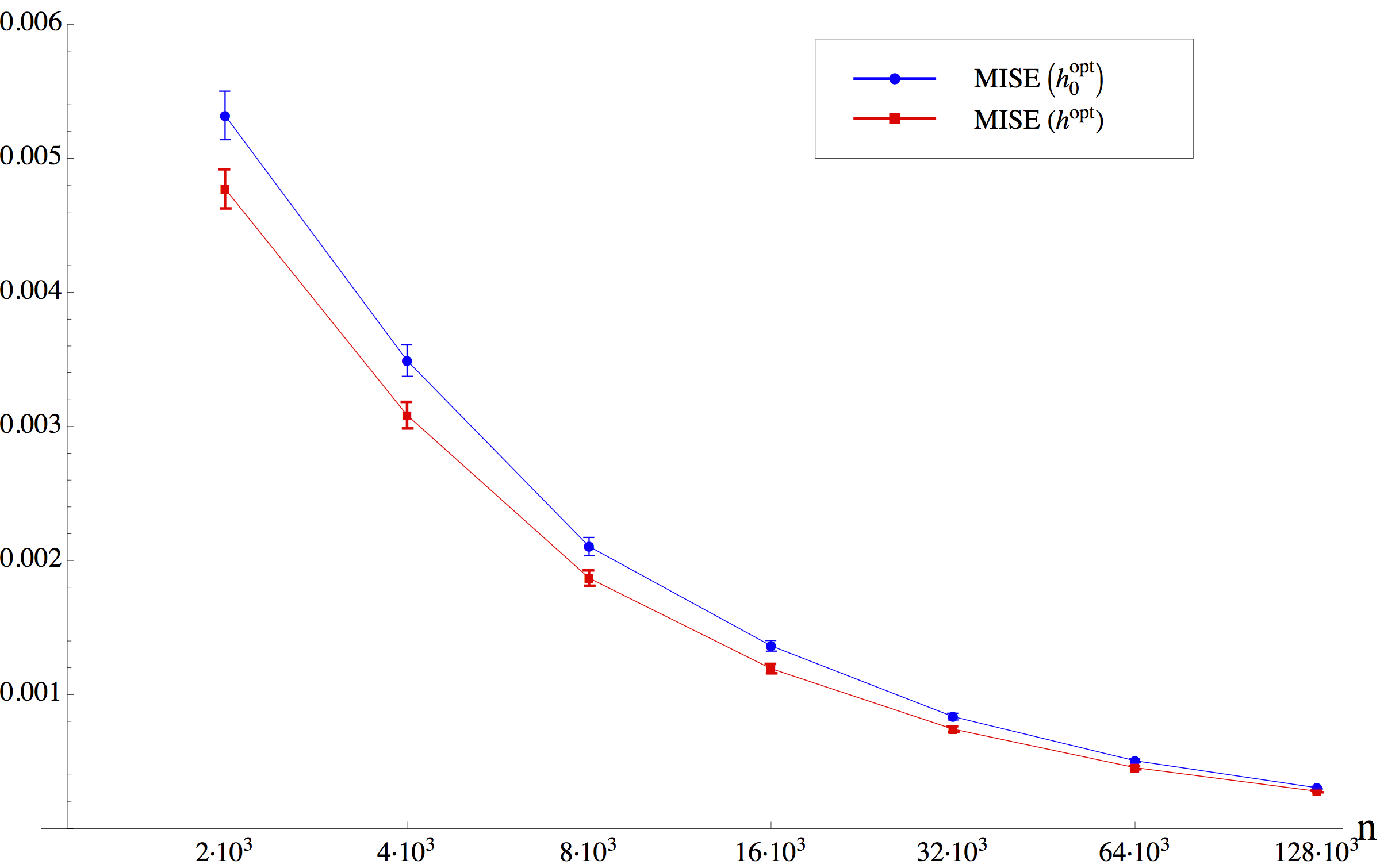}
		\caption{\scriptsize
		$M=4$}\label{figerrplot4}
	\end{subfigure}
	~~
	\begin{subfigure}[t]{0.4\textwidth}
		\centering
		\includegraphics[width=\textwidth]{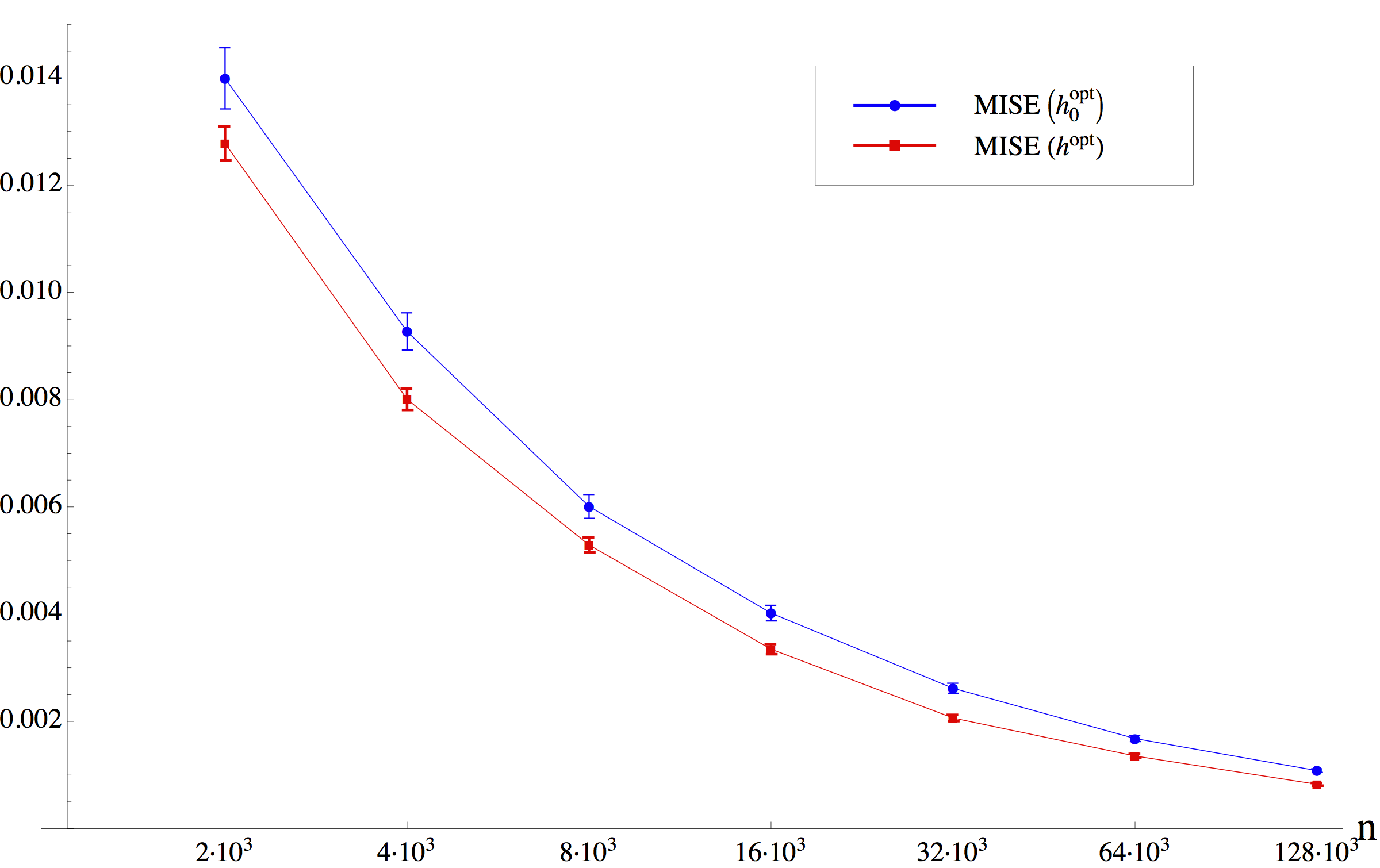}
		\caption{\scriptsize
		$M=8$}\label{figerrplot8}
	\end{subfigure}
	\caption{\footnotesize$\MISE[p,\phat,\NN(n),\hh]$ for $\hoptv$ and $\hoptv_0$.} 
\end{figure}
The rate of decay of the error is very close to $O(\|\NN\|^{-4/5})$, which is consistent with our calculations.\\

It must be noted, that the graphs are plotted at the theoretically optimal values of $\hh$, and the question of whether or not the error can be improved, must be addressed. Our experiment computes the values of $\MISE$ for a variety of values of $\hh$ and the bandwidth that produces the smallest error is indeed slightly different from our theoretical predictions. However, the discrepancy between them is negligible and it does become smaller as sample sizes increase. 

Let us define
 \[
\begin{aligned}
\hoptv_{\MISE} &= \mathrm{argmin}_{\hh\in\RR^M_{+}}\MISE[p^*, \phat^*; \NN,\hh]\\
	&=\mathrm{argmin}_{h\in\RR_{+}}\MISE[p^*, \phat^*; \NN,(h,h,\ldots,h)],\\
	&=h^{\mathrm{opt}}_{\MISE}\cdot(1,1,\ldots,1)
\end{aligned}
 \]
where the last two equalities hold in view of the symmetry assumption on $p^*$.

Figure \ref{figminplot} shows that the ratio of the numerically computed approximation of $\hoptv_{\MISE} $ to the theoretically  predicted value $\hoptv$ stays very close to one, which confirms validity of our approach.
\begin{figure}[htb]
 \centering
	\begin{subfigure}[t]{0.4\textwidth}
		\centering
		\includegraphics[width=\textwidth]{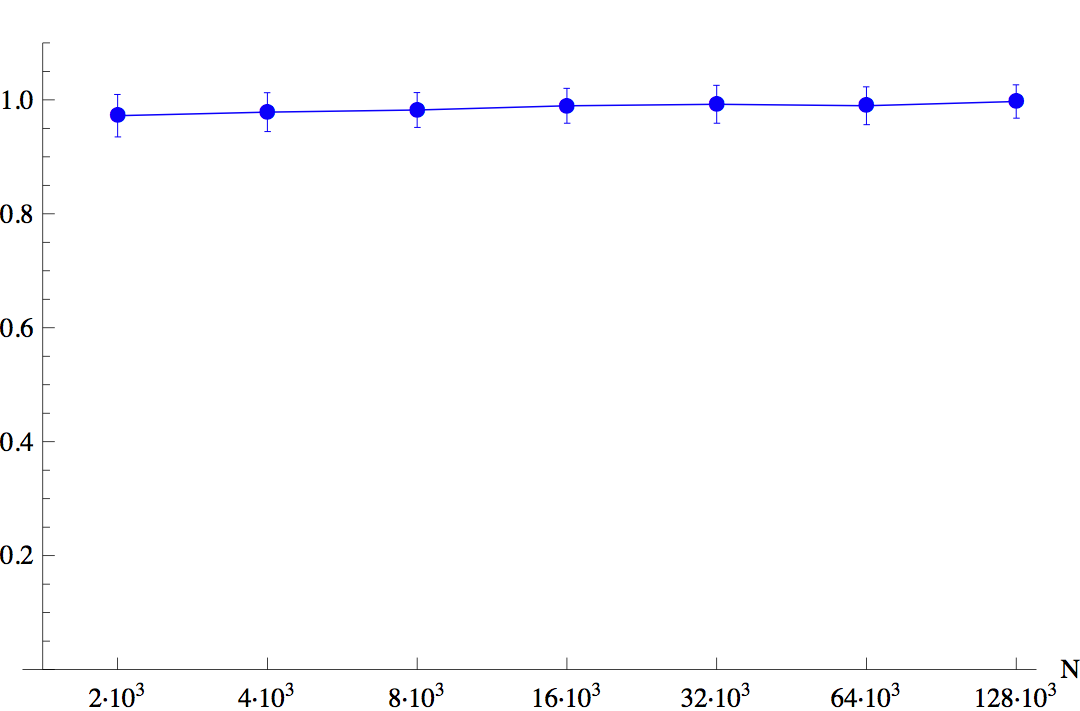}
		\caption{\scriptsize
		$M=4$}\label{figminplot4}
	\end{subfigure}
	~~
	\begin{subfigure}[t]{0.4\textwidth}
		\centering
		\includegraphics[width=\textwidth]{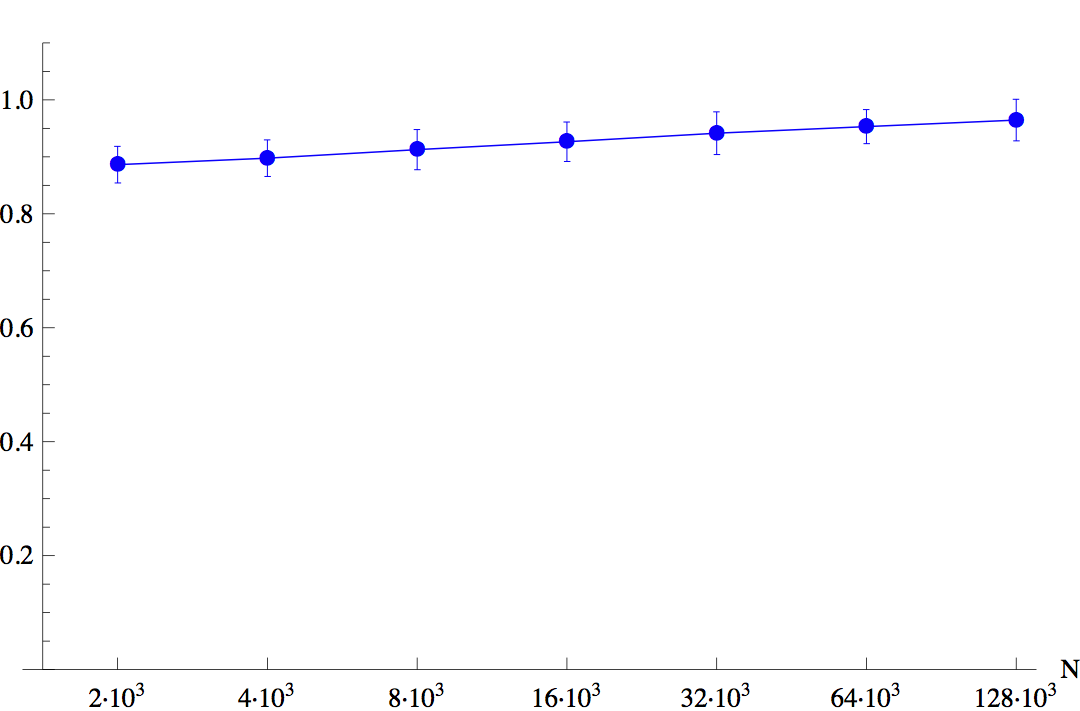}
		\caption{\scriptsize
		$M=8$}\label{figminplot8}
	\end{subfigure}
	\caption{\footnotesize The ratio $h^{\mathrm{opt}}/h^{\mathrm{opt}}_{\MISE}$
	for different subset configurations.}\label{figminplot}
\end{figure}

\subsection{Numerical experiments with gamma distributed subset posterior densities}

\subsubsection{Description of the experiment}
The numerical experiment mimics the one with normally distributed samples, with the only difference that this experiment generates samples distributed with $\Gamma(x,\alpha=3,\beta=3)$. 

\subsubsection{Numerical results}
The results of the experiments replicate the same behavior for gamma distributed samples. We must note that the location of the optimal bandwidth parameter is significantly different that in the case of normally distributed samples. Nevertheless, the results are clearly show the advantage of our choice of $\hh$, which is demonstrated in Figures \ref{figerrplotg4} and \ref{figerrplotg8}.
%
%
%
 \begin{figure}[htb]
 \centering
	\begin{subfigure}[t]{0.4\textwidth}
		\centering
		\includegraphics[width=\textwidth]{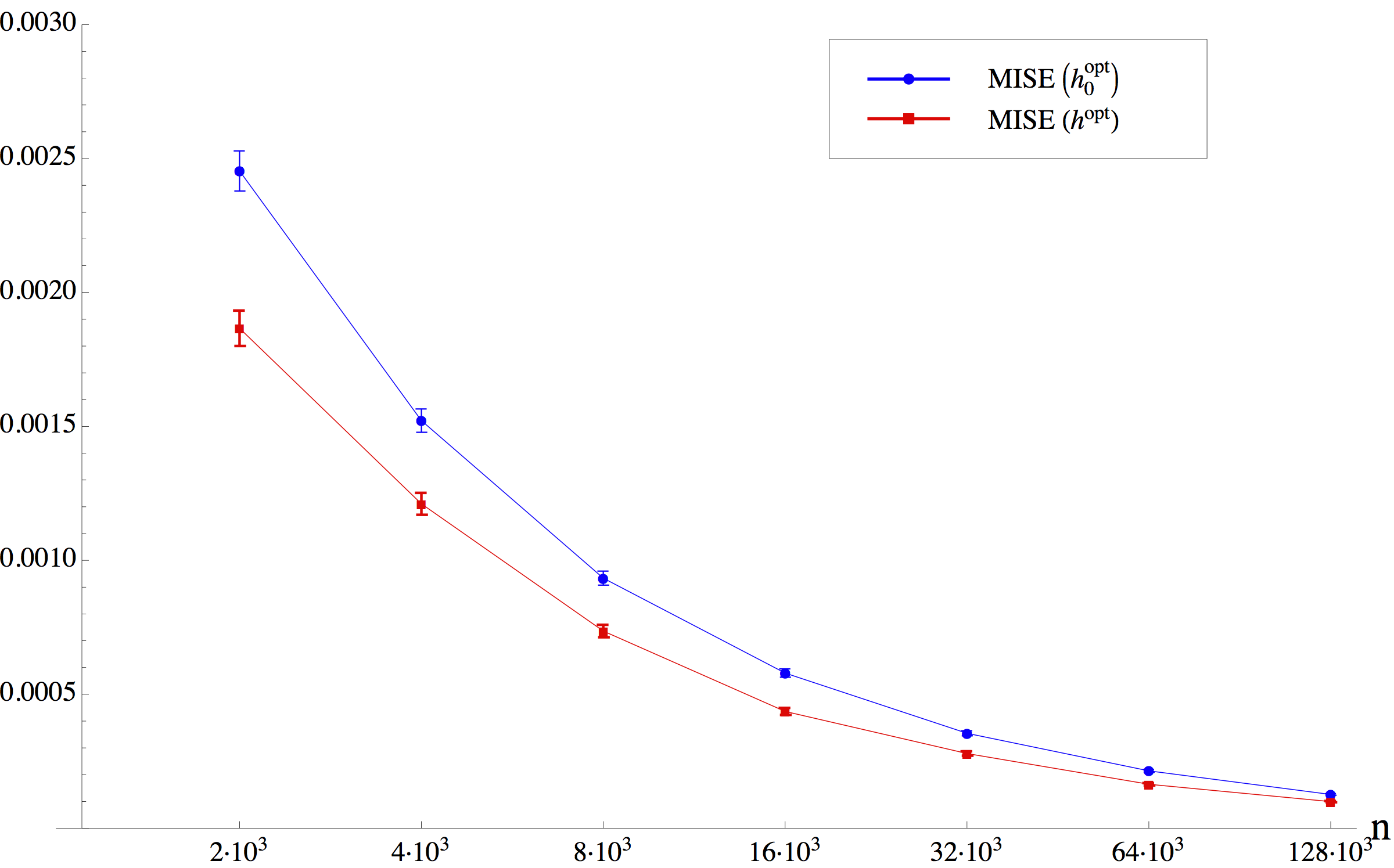}
		\caption{\scriptsize
		$M=4$}\label{figerrplotg4}
	\end{subfigure}
	~~
	\begin{subfigure}[t]{0.4\textwidth}
		\centering
		\includegraphics[width=\textwidth]{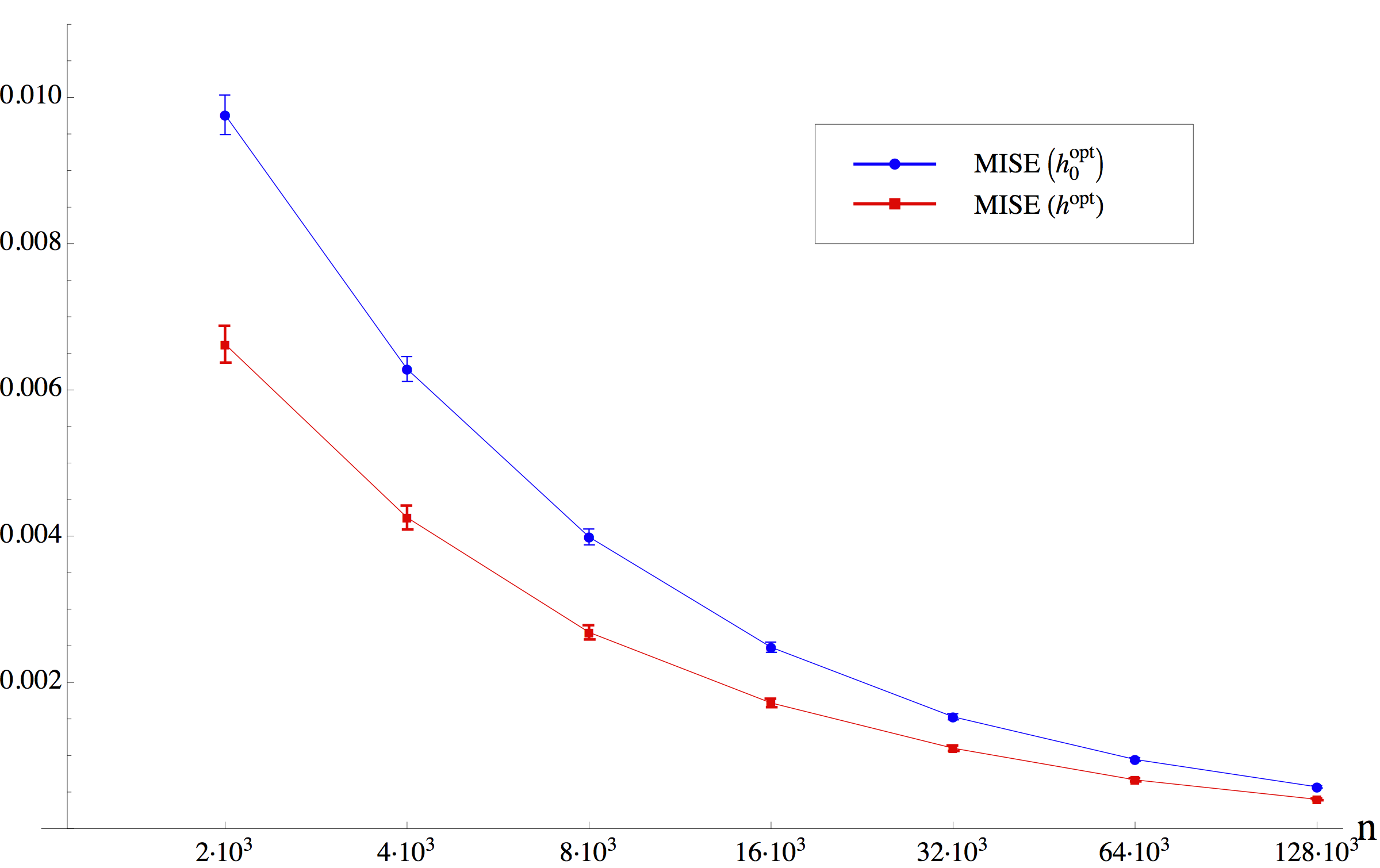}
		\caption{\scriptsize
		$M=8$}\label{figerrplotg8}
	\end{subfigure}
	\caption{\small$\MISE[p,\phat,\NN(n),\hh]$ for $\hoptv$ and $\hoptv_0$.}
\end{figure}

Just as before, our experiment verify that formula \eqref{hoptgamma} yields near optimum values of $\MISE$, see Figure \ref{figminplotg}.
\begin{figure}[htb]
 \centering
	\begin{subfigure}[t]{0.4\textwidth}
		\centering
		\includegraphics[width=\textwidth]{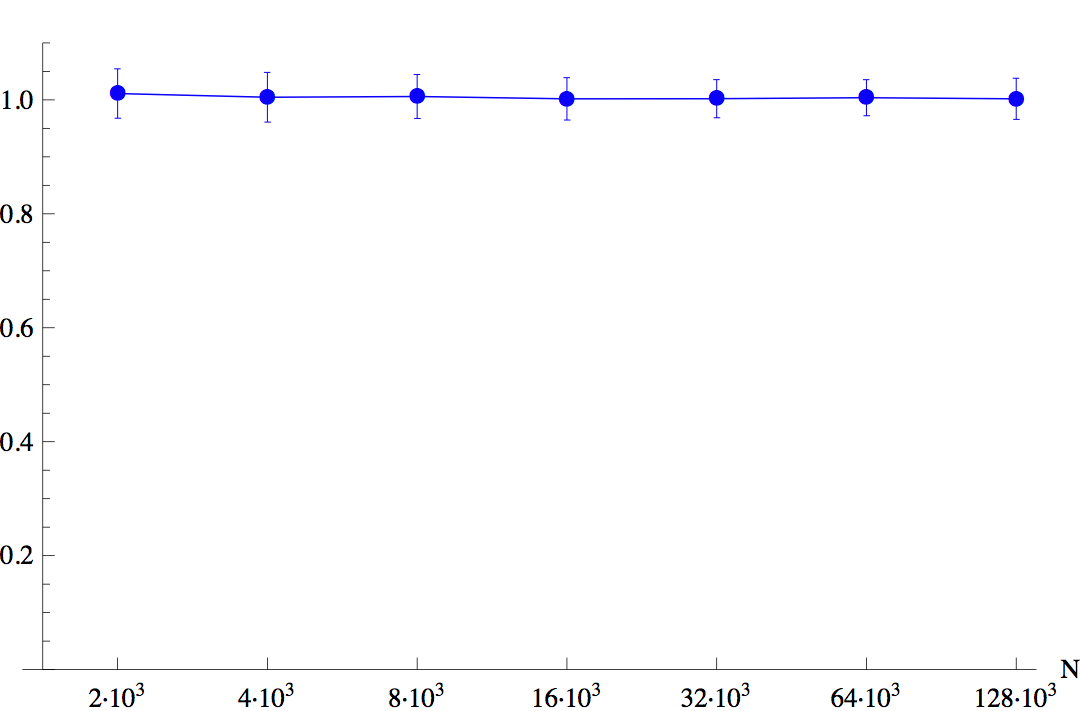}
		\caption{\scriptsize
		$M=4$}\label{figminplotg4}
	\end{subfigure}
	~~
	\begin{subfigure}[t]{0.4\textwidth}
		\centering
		\includegraphics[width=\textwidth]{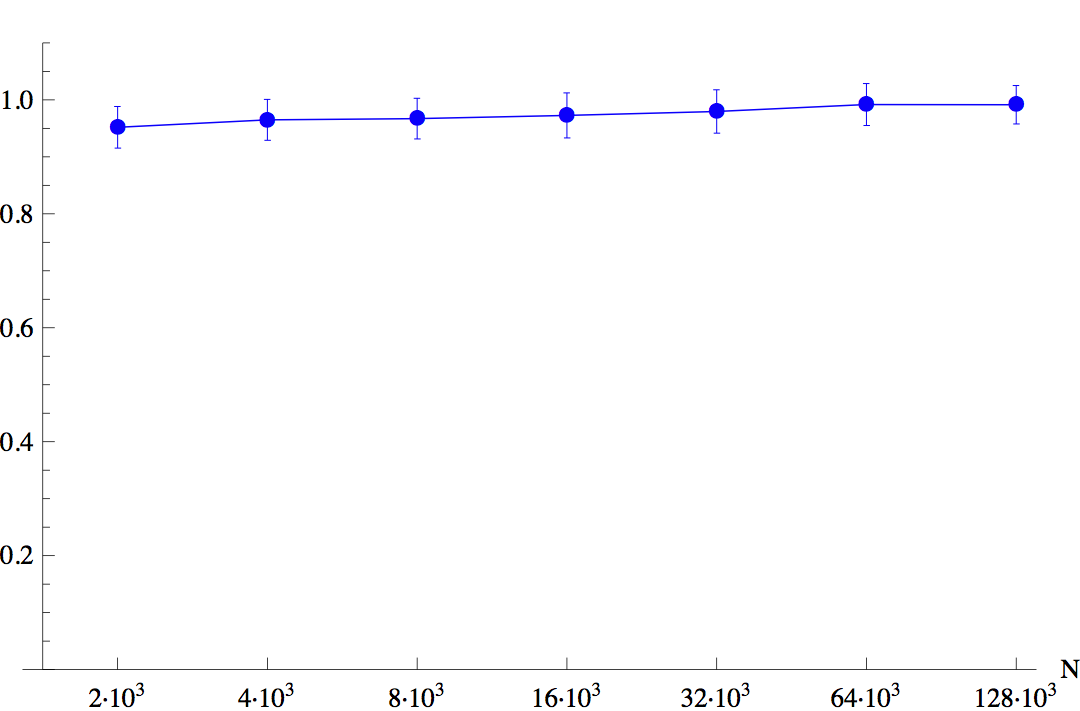}
		\caption{\scriptsize
		$M=8$}\label{figminplotg8}
	\end{subfigure}
	\caption{\small The ratio of $h^{\mathrm{opt}}/h^{\mathrm{opt}}_{\MISE}$
	for different subset configurations.}\label{figminplotg}
\end{figure}

\section{Appendix}


\subsection{Kernel density estimators and asymptotic error analysis}
In this section we will use the following notation. The function $f$ denotes a probability density and its kernel density estimator is given by
\begin{equation}\label{kdensest}
\hat{f}(x; X_1, X_2,\dots, X_N, h )=\frac{1}{N h}\sum_{i=1}^{N}K \left(\frac{x-X_{i}}{h}\right).  \end{equation}
where $X_1,X_2,\dots X_n \sim f$ are i.i.d. samples.

\begin{lemma}[\bf bias expansion]\label{biasformula}
Let $K$ satisfy \eqref{kcond1} and \eqref{kcond2}. Let $f$ be a probability density function satisfying \eqref{pcond1} and \eqref{pcond2}. Let $\fhat_{n,h}(x)$ be an estimation of $f$ given by \eqref{kdensest}. Then
\begin{itemize}
  \item [$(i)$] $\bias(\fhat_{n,h})$ is given by
  \begin{equation}\label{biasexp}
  \begin{aligned}
\big[&\bias (\fhat_{n,h})\big]  (x)= \\
&\;\;\quad = \EXP\big[\fhat_{n,h}(x)\big]-f(x)=\frac{h^2k_2f''(x)}{2}+ {[E_b(f,K)](x \SP ; h)}
\end{aligned}
\end{equation}
where
\begin{equation}\label{biaserr}
  E_b(x;h) :=\int_{\RR}K(t)\Big(\int_{x}^{x-ht}\frac{f'''(z)(x-ht -z)^2}{2}\,dz\Big)\,dt\,.
\end{equation}
  \item [$(ii)$] For all $n\geq 1$ and $h>0$ the term  $E_b(\cdot \SP; n,h)$ satisfies the bounds
  \begin{equation}\label{biaserrbound}
  \begin{aligned}
   & |E_b(x \SP; h)| \, \leq \, \frac{C k_3}{6}h^3\,, \quad x\in \RR \\
    \int_{\RR} & |E_b(x \SP ; h)|\, dx \,\leq  C\frac{k_3}{6} h^3 \\
    \int_{\RR} &|E_b(x \SP ; n,h)|^2\, dx \,\leq \,\frac{C^2 k_3^2}{36} h^6
    \end{aligned}
\end{equation}
	for some constant $C$.
  \item [$(iii)$] The square-integrated $bias(\fhat_{n,k})$ satisfies
  \begin{equation}\label{sqintbias}
    \int_{\RR} \bias ^2(\fhat_{n,k}) \, dx  \, = \, \frac{h^4 k_2^2}{4}\int_{\RR} (f''(x))^2 \, dx  + \mathcal{E}_b(n,h) \, < \, \infty
  \end{equation}
  with
  \begin{equation}\label{intbias2err}
  |\mathcal{E}_b(n,h)| \leq C_b \Big(k_2 + \frac{k_3}{6}h\Big) \frac{k_3 h^5}{6}
  \end{equation}
  for some constant $C_b$, and all $n \geq 1$, $h>0$.
\end{itemize}
\end{lemma}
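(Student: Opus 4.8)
The plan is to reduce the whole statement to a single second-order Taylor expansion of $f$ convolved against the kernel, and then to control the integral remainder using the boundedness and integrability hypotheses \eqref{pcond1}--\eqref{pcond2}. First I would compute the expectation. Since $X_1,\dots,X_N$ are i.i.d.\ with density $f$, every summand in \eqref{kdensest} has the same law, so after the substitution $t=(x-z)/h$ one gets
\[
\EXP[\fhat_{n,h}(x)]=\frac1h\,\EXP\!\left[K\!\left(\frac{x-X_1}{h}\right)\right]=\frac1h\int_\RR K\!\left(\frac{x-z}{h}\right)f(z)\,dz=\int_\RR K(t)\,f(x-ht)\,dt.
\]
Note that the sample size drops out of the bias entirely, which is exactly why the bounds in $(ii)$ are claimed uniformly in $n$.

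Next I would Taylor-expand $f(x-ht)$ to second order with the integral form of the remainder, writing $f(x-ht)=f(x)-ht\,f'(x)+\tfrac{(ht)^2}{2}f''(x)+\int_x^{x-ht}\tfrac{f'''(z)(x-ht-z)^2}{2}\,dz$. Integrating against $K$ and invoking the normalization $\int K=1$, the vanishing first moment $\int tK=0$, and the definition of $k_2$ from \eqref{kcond1}--\eqref{kcond2} annihilates the zeroth- and first-order contributions and leaves the leading term $\tfrac{h^2k_2}{2}f''(x)$, the residual integral being precisely $E_b(x;h)$ as defined in \eqref{biaserr}. This establishes $(i)$.

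For $(ii)$ I would estimate the remainder directly. The pointwise bound follows from $|f'''|\le C$ (hypothesis \eqref{pcond1}) together with the elementary identity $\big|\int_x^{x-ht}(x-ht-z)^2\,dz\big|=\tfrac{h^3|t|^3}{3}$; integrating against $K$ and using $k_3=\int|t|^3K(t)\,dt$ then yields $|E_b(x;h)|\le\tfrac{Ck_3}{6}h^3$. The $L^2$ bound comes for free from $\int E_b^2\,dx\le\|E_b\|_\infty\int|E_b|\,dx$. The genuinely delicate step is the $L^1$ bound: here I would apply Tonelli to the nonnegative triple integral, interchange the order so that the inner $x$-integration runs over $\{x:\ z\in[x,x-ht]\}$ (an interval of length $|ht|$), and bound $(x-ht-z)^2\le h^2t^2$ on the remainder interval. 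This collapses the $x$-integral of $|E_b|$ to a multiple of $h^2t^2\cdot|ht|\cdot\int_\RR|f'''(z)|\,dz$, and the finiteness of $\int|f'''|$ (hypothesis \eqref{pcond2}) is exactly what forces the $O(h^3|t|^3)$ behaviour; a final integration against $K$ supplies the factor $k_3$.

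Finally, for $(iii)$ I would expand $\bias^2=\tfrac{h^4k_2^2}{4}(f'')^2+h^2k_2\,f''E_b+E_b^2$ and integrate termwise. The first term is the claimed leading term, and $\mathcal{E}_b$ is the sum of the remaining two. I would bound the cross term by $h^2k_2\,\|E_b\|_\infty\int|f''|\,dx\le C\,k_2\,\tfrac{k_3h^5}{6}$, using the pointwise estimate from $(ii)$ and the integrability of $f''$ from \eqref{pcond2}, and the quadratic term by $\int E_b^2\,dx\le\tfrac{k_3}{6}h\cdot\tfrac{k_3h^5}{6}$ from the $L^2$ estimate; together these give the stated form $C_b\big(k_2+\tfrac{k_3}{6}h\big)\tfrac{k_3h^5}{6}$. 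I expect the $L^1$ estimate in $(ii)$ to be the main obstacle, as it is the only place where the order of integration must be reversed over a remainder interval whose endpoints depend on $x$, and where one must genuinely exploit the \emph{global} integrability of $f'''$ rather than merely its pointwise bound.
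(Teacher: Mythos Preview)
Your proposal is correct and follows essentially the same route as the paper: the i.i.d.\ reduction and substitution $t=(x-y)/h$, the second-order Taylor expansion with integral remainder, Tonelli for the $L^1$ bound on $E_b$, the product bound $\int E_b^2\le\|E_b\|_\infty\int|E_b|$, and the expansion of $\bias^2$ are all exactly what the paper does. The only cosmetic differences are that the paper substitutes $\alpha=x-ht-z$ before applying Tonelli (recovering the sharp constant $k_3/6$, whereas your cruder bound $(x-ht-z)^2\le h^2t^2$ loses a harmless factor of~3 absorbed into $C$), and for the cross term in $(iii)$ the paper pairs $\|f''\|_\infty$ with $\int|E_b|$ rather than your $\|E_b\|_\infty$ with $\int|f''|$; both pairings yield the stated bound.
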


\begin{proof}
Using \eqref{kdensest} and the fact that $X_i$, $i=1,\dots,n$ are i.i.d. we obtain
\[
\begin{aligned}
\bias_{n,h}(x)&=\EXP\big[\fhat_{n,h}(x)\big]-f(x)=\\
&=\frac{1}{h}\EXP\Big[K\Big(\frac{x-X_1}{h}\Big)\Big]-f(x)\\		&=\frac{1}{h}\int_{\RR}K\Big(\frac{x-y}{h}\Big)f(y)\,dy-f(x)\\
&=\int_{\RR}K(t)\big(f(x-ht)-f(x)\big)\,dt
\end{aligned}
\]
where we used the substitution $t=(x-y)/h$. Employing Taylor's Theorem with an error term in integral form and using \eqref{kcond1} we get
\begin{equation*}
\begin{aligned}
\bias_{n,h}(x)&=\int_{\RR}K(t)\bigg(-htf'(x)+\frac{h^2t^2}{2}f''(x) + \int_{x}^{x-ht}\frac{f'''(z)(x-ht-z)^2}{2}\,dz\bigg)\,dt\\
			&=\frac{h^2f''(x)}{2}\int_{\RR}t^2K(t)\,dt+\int_{\RR}K(t)\Big(\int_{x}^{x-ht}\frac{f'''(z)(x-ht-z)^2}{2}\,dz \Big)\,dt
\end{aligned}
\end{equation*}
which proves $(i)$.

\par

By \eqref{kcond2} we have
\begin{equation}\label{Linferrbiasbnd}
|E_b(x \SP; n,h)|\leqslant C\bigg(\int_{\RR}K(t)\bigg|\int_{x}^{x-ht}\frac{(x-ht -z)^2}{2}\,dz\bigg|\, dt\bigg)= \frac{C k_3 }{6} h^3
\end{equation}
and by \eqref{pcond2}, using the substitution $\alpha=x-ht-z$ and employing Tonelli's Theorem, we obtain

\begin{equation}\label{L1errbiasbnd}
\begin{aligned}
\int_{\RR}  |E_b&(x \SP; n,h)|\,dx \\
& \leq \, \int_{\RR}\int_{\RR}K(t)\int_{x-\frac{h}{2}(|t|+t)}^{x+\frac{h}{2}(|t|-t)}\frac{|f'''(z)|(x-ht -z)^2}{2}\,dz\,dt\,dx\\[2pt]
&= \, \int_{\RR}K(t)\int_{-\frac{h}{2}(|t|+t)}^{\frac{h}{2}(|t|-t)}\bigg(\Big(\int_{\RR}|f'''(x-ht-\alpha)|\,dx\Big)\frac{\alpha^2}{2} \bigg)\,d\alpha\,dt\\
&\leq \, C\int_{\RR}K(t)\bigg(\int_{-\frac{h}{2}(|t|-t)}^{\frac{h}{2}(|t|+t)}\frac{\alpha^2}{2}\,d\alpha\,\bigg)dt =\frac{h^3}{6}C k_3\,.
\end{aligned}
\end{equation}

Thus, combining the two bounds above we conclude
\[
\int_{\RR} |E_b(x \SP; n,h)|^2 \, dx \leq \frac{C k_3 }{6}h^3 \int_{\RR} |E_b(x \SP; n,h)| \, dx \leq \frac{C^2 k_3^2 }{36}h^6\,.
\]

\par

Observe that
\begin{equation}\label{bias2}
\begin{aligned}
\bias^2(\fhat_{n,h})(x) = \frac{h^4k_2^2 }{4}(f''(x))^2 + h^2k_2 f''(x) E_b(x \SP; n,h) + E_b^2(x; n,h)\,.
\end{aligned}
\end{equation}
By \eqref{pcond1}, \eqref{Linferrbiasbnd} and \eqref{L1errbiasbnd}
\begin{equation}\label{L1loworderbnd}
\begin{aligned}
\big|\mathcal{E}_b(n,h)\big|&:=\bigg| \int_{\RR} \Big(h^2k_2 f''(x)E_b(x \SP; n,h) + E_b^2(x; n,h)\Big) \, dx \bigg|\\
& \quad \leq \Big(h^2k_2 C + \frac{C k_3}{6}h^3\Big) \int_{\RR} |E_b(x \SP; n,h)|\\
& \quad \leq \Big(h^2k_2 C + \frac{C k_3}{6}h^3\Big) \frac{h^3}{6}C k_3\,.
\end{aligned}
\end{equation}
By \eqref{pcond1} and \eqref{pcond2} we have $\int_{\RR} (f''(x))^2 \, dx < \infty$. Hence by setting $C_b=C^2$, using \eqref{bias2} and \eqref{L1loworderbnd} we obtain \eqref{intbias2err}.
\end{proof}

\begin{lemma}[\bf variation expansion]\label{variationformula}
Let $K$ satisfy \eqref{kcond1} and \eqref{kcond2}, with $r=2$. Let $f$ satisfy \eqref{pcond1} and \eqref{pcond2}, and $\fhat_{n,h}(x)$ be the estimator of $f$ given by \eqref{kdensest}. Then
\begin{itemize}
  \item [$(i)$] $\VAR(\fhat_{n,h})$ is given by
  \begin{equation}\label{varexp}
\big[\VAR(\fhat_{n,h})\big](x) =f(x)\frac{1}{nh}\int_{\RR}K^2(t)\,dt+ E_{V}(x \SP ; n,h)\,, \quad x \in \RR
\end{equation}
with
\begin{equation}\label{varerr}
\begin{aligned}
  E_V(x;n,h) &= 
    -\frac{1}{n}\bigg(\int_{\RR} t K^2(t) \int_{0}^{1} \SP f'(x-htu)\, du \,dt +\Big(f(x)+\bias(\fhat_{n,h})(x)\Big)^2 \bigg)\,
\end{aligned}
\end{equation}
  \item [(ii)] The term $E_V(x \SP; n,h)$ satisfies
\begin{equation}\label{L1errvarbnd}
\begin{aligned}
\mathcal{E}_V(n,h)&=\left|\int_{\RR} E_V(x)\,dx\right| \\
& \leq \SP \frac{C_V}{n} \bigg( 2 + h^2 k_2 + \big(k_2 + \frac{k_3}{3}h\big) \frac{h^5}{6}k_3\Big)\,.
\end{aligned}
\end{equation}
\end{itemize}
\end{lemma}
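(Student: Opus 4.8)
The plan is to exploit independence to reduce the variance of the sum to $n^{-1}$ times the variance of a single summand, and then to expand that single-summand variance by a first-order Taylor argument. Writing $\fhat_{n,h}(x) = \frac{1}{n}\sum_{i=1}^{n} \frac{1}{h}K\big(\frac{x - X_i}{h}\big)$ with i.i.d.\ $X_i \sim f$, I would first record
\[
\VAR[\fhat_{n,h}(x)] = \frac{1}{n}\Big( \EXP\big[\tfrac{1}{h^2}K^2(\tfrac{x-X_1}{h})\big] - \big(\EXP[\tfrac{1}{h}K(\tfrac{x-X_1}{h})]\big)^2 \Big).
\]
For the second-moment term I change variables $t = (x-y)/h$ to obtain $\frac{1}{h}\int_\RR K^2(t) f(x-ht)\,dt$, while the first-moment term is, by definition, $\EXP[\fhat_{n,h}(x)] = f(x) + \bias(\fhat_{n,h})(x)$, which accounts for the squared-expectation piece appearing in \eqref{varerr}.

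Next I would apply the exact first-order identity $f(x - ht) = f(x) - ht\int_0^1 f'(x - htu)\,du$ (equivalently, the fundamental theorem of calculus with the substitution $z = x - htu$) inside the second-moment integral. Since $\int_\RR K^2(t)\,dt < \infty$ by \eqref{kcond1}, the constant term produces the leading part $f(x)\frac{1}{nh}\int_\RR K^2(t)\,dt$, while the linear term produces exactly $-\frac{1}{n}\int_\RR t K^2(t)\int_0^1 f'(x-htu)\,du\,dt$. Collecting this with the squared first moment yields the claimed formula for $E_V(x;n,h)$ and establishes part $(i)$.

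For part $(ii)$ I would integrate $E_V(\cdot;n,h)$ over $\RR$ and bound the two resulting contributions separately. For the remainder term I swap the order of integration by Tonelli and integrate in $x$ first, using translation invariance and \eqref{pcond2} to get $\int_\RR|f'(x-htu)|\,dx = \|f'\|_1 < \infty$ uniformly in $t,u$; the outer factor is finite because $K$ is bounded by \eqref{kcond1}, so that $\int_\RR |t|K^2(t)\,dt \le C\int_\RR |t|K(t)\,dt = C k_1$. For the squared-expectation term I write $(f+\bias)^2 = f^2 + 2f\,\bias + \bias^2$ and substitute the bias expansion \eqref{biasexp}; the resulting pieces are controlled by \eqref{pcond1}, \eqref{pcond2}, the integration by parts $\int f f'' = -\int (f')^2$, and the error bounds \eqref{biaserrbound} and \eqref{intbias2err} of Lemma \ref{biasformula}. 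Assembling these estimates gives a bound of the form $\frac{C_V}{n}\big(2 + h^2 k_2 + (k_2 + \frac{k_3}{3}h)\frac{h^5}{6}k_3\big)$, as asserted.

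The routine parts are the change of variables and the Taylor identity in $(i)$; the main obstacle is essentially bookkeeping in $(ii)$, namely organizing the expansion of $\int(f+\bias)^2\,dx$ so that the $h$-dependence of each term lines up with the stated form, and making sure the one place where a second power of $K$ carries a moment (the factor $\int_\RR |t|K^2(t)\,dt$) is handled through the boundedness of $K$ in \eqref{kcond1} rather than through an unavailable moment hypothesis on $K^2$.
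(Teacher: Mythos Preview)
Your proposal is correct and follows the same strategy as the paper's proof. Two minor tactical differences in part~(ii) are worth noting: first, after establishing integrability of the $tK^2(t)$-term, the paper applies Fubini and uses $\int_\RR f'(x-htu)\,dx = 0$ (since $f\to 0$ at $\pm\infty$) to conclude that this contribution to $\int_\RR E_V\,dx$ vanishes \emph{exactly}, rather than merely being bounded by a $k_1$-dependent constant as in your argument; second, for the squared-expectation piece $(f+\bias)^2$ the paper uses the cruder inequality $(a+b)^2\le 2(a^2+b^2)$ together with the bound on $\int_\RR\bias^2\,dx$ from Lemma~\ref{biasformula}(iii), instead of your exact expansion with the integration-by-parts step $\int ff''=-\int (f')^2$. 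Both variants lead to a bound of the stated form, with your expansion arguably making the appearance of the $h^2k_2$ term more transparent.
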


\begin{proof} Using \eqref{biasexp} and the fact that $X_i$, $i=1,\dots,n$, are i.i.d. we obtain
\[
\begin{aligned}
\VAR(\fhat_{n,h}(x))&=
\VAR\Big(\frac{1}{h}K\Big(\frac{x-X_1}{h}\Big)\Big)\\
&=\frac{1}{n}\int_{\RR}\frac{1}{h^2}K^2 \left(\frac{x-y}{h}\right)f(y)\,dy-\frac{1}{n}\left(\int_{\RR}\frac{1}{h}K\left(\frac{x-y}{h}\right)f(y)\,dy\right)^2\\
&=\frac{1}{nh}\int_{\RR}K^2(t)f(x-ht)\,dt -\frac{1}{n}\Big(f(x)+\bias(\fhat_{n,h})(x)\Big)^2\\
&=\frac{1}{nh}\int_{\RR}K^2(t)f(x)\,dt +\frac{1}{nh}\int_{\RR}K^2(t)\Big(\int_{x}^{x-ht}f'(z)\,dz\,\Big)dt \\
&\quad -\frac{1}{n}\Big(f(x)+\bias(\fhat_{n,h})(x)\Big)^2\\
&=\frac{1}{nh}\int_{\RR}K^2(t)f(x)\,dt -\frac{1}{n}\int_{\RR}tK^2(t)\int_{0}^{1}f'(x-htu)\,du\,dt \\
&\quad -\frac{1}{n}\Big(f(x)+\bias(\fhat_{n,h})(x)\Big)^2\\
\end{aligned}
\]
which proves \eqref{varexp} and \eqref{varerr}.

\par

We next estimate the terms
\[
\begin{aligned}
E_1(x):=\int_{\RR}t K^2(t)\bigg(\int_{0}^{1}f'(x-htu)\,du\bigg)\,dt\,, \quad E_2(x):= \Big(f(x) + \bias(\fhat_{n,h})(x)\Big)^2\,.
\end{aligned}
\]
Observe that \eqref{pcond1}-\eqref{pcond2} imply
\[
\int_{\RR} |f'(x)|\, dx = \int_{\RR} |f'(x+\alpha)|\, dx :=I_1\, < \, \infty
\]
for any $\alpha \in \RR$. Then using Tonelli's Theorem ans \eqref{kcond2} we obtain
\[
\begin{aligned}
\int_{\RR} |E_1(x)|\, dx & \leq \int_{\RR} |t|K^2(t)\bigg(\int_{\RR} \int_{0}^{1} |f'(x-htu)|\,du\,dx \bigg)\, dt\\
& \leq \int_{\RR} |t|K^2(t) \bigg(\int_{0}^{1} \Big(\int_{\RR} |f'(x-htu)|\,dx \Big)\,du \bigg)\, dt  \leq I_1  k_1\\
\end{aligned}
\]
Since $E_1$ is integrable we can use Fubini's Theorem and this yields
\[
\begin{aligned}
\int_{\RR} E_1(x)\, dx & = \int_{\RR} t K^2(t)\bigg(\int_{\RR} \int_{0}^{1} f'(x-htu)\,du\,dx \bigg)\, dt\\
& = \int_{\RR} tK^2(t) \bigg(\int_{0}^{1} \Big(\int_{\RR} f'(x-htu)\,dx \Big)\,d u \bigg)\, dt = 0
\end{aligned}
\]
where we used the fact that $\lim_{x\to \pm \infty}f(x) = 0$. Next, by \eqref{pcond1} and \eqref{biaserrbound} we get
\[
\begin{aligned}
\int_{\RR} |E_2(x)| \, dx & \leq 2 \int_{\RR} \Big(f^2(x) + \bias^2(\fhat_{n,h})(x) \Big)\, dx\\ 
&\leq 2 C + C h^2 k_2  + \Big(k_2 C + \frac{C k_3}{6}h\Big) \frac{h^5}{3}C k_3\,.
\end{aligned}
\]
Combining the above estimates we obtain \eqref{L1errvarbnd}\,.
\end{proof}
\begin{lemma}[\textbf{kernel autocorrelation}]\label{convoLemma}
Let $K$ satisfy \eqref{kcond1} and \eqref{kcond2}, then the function
\[
	K_2(z)=\int_{\RR}K(s)K(s-z)\,ds \geq 0\,, \quad z \in \RR
\]
satisfies
\[
\begin{aligned}
	\int_{\RR}K_2(z)\,dz&=1, \quad   \int_{\RR}z\,K_2(z)\,dz&=0\,.
\end{aligned}
\]
Moreover, for any sufficiently smooth $f(x)$
\[
    \int \frac{1}{h}K_2\left(\frac{z-x}{h}\right)f(z)\,dz=f(x) +E_{C,f} \quad \text{with} \quad |E_{C,f}|\leq  \|f''\|_{\infty}k_2h^2\,.
\]
\end{lemma}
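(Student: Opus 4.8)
The plan is to reduce every claim to the moment conditions on $K$ in \eqref{kcond1}--\eqref{kcond2} by repeatedly interchanging the order of integration and performing the linear substitution $u=s-z$ inside the autocorrelation integral. Nonnegativity of $K_2$ is immediate: since $K\geq 0$ by \eqref{kcond1}, the integrand $K(s)K(s-z)$ is nonnegative for every $s$, hence so is $K_2(z)$.

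For the total mass I would write $\int_\RR K_2(z)\,dz=\int_\RR\int_\RR K(s)K(s-z)\,ds\,dz$ and invoke Tonelli's theorem (legitimate since the integrand is nonnegative) to swap the order, obtaining $\int_\RR K(s)\big(\int_\RR K(s-z)\,dz\big)\,ds$; the inner integral equals $\int_\RR K(u)\,du=1$ after $u=s-z$, so the whole expression collapses to $\int_\RR K(s)\,ds=1$. For the first moment I would proceed identically, but here Fubini must first be justified: substituting $z=s-u$ gives $|z|=|s-u|\leq|s|+|u|$, so $\int_\RR|z|K(s-z)\,dz\leq|s|+k_1$ and $\iint|z|K(s)K(s-z)\,ds\,dz\leq 2k_1<\infty$ by \eqref{kcond2}. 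With Fubini in hand, the inner integral $\int_\RR z\,K(s-z)\,dz=\int_\RR(s-u)K(u)\,du=s$ (using $\int K=1$ and $\int uK=0$), whence $\int_\RR z\,K_2(z)\,dz=\int_\RR s\,K(s)\,ds=0$ by the vanishing first moment in \eqref{kcond1}.

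For the convolution estimate I would change variables $t=(z-x)/h$ to rewrite $\int\frac1h K_2\big(\frac{z-x}h\big)f(z)\,dz=\int_\RR K_2(t)f(x+ht)\,dt$, then Taylor-expand $f(x+ht)=f(x)+ht\,f'(x)+\frac{h^2t^2}{2}f''(\xi_t)$ with $\xi_t$ between $x$ and $x+ht$. The first two terms integrate against $K_2$ to $f(x)\cdot1$ and $hf'(x)\cdot0$ by the two moment identities just established, leaving $E_{C,f}=\frac{h^2}{2}\int_\RR t^2K_2(t)f''(\xi_t)\,dt$. The one nonroutine computation is the second moment of $K_2$: repeating the swap-and-substitute argument gives $\int_\RR t^2K(s-t)\,dt=s^2+k_2$ and hence $\int_\RR t^2K_2(t)\,dt=\int_\RR K(s)(s^2+k_2)\,ds=2k_2$. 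Bounding $|f''(\xi_t)|\leq\|f''\|_\infty$ then yields $|E_{C,f}|\leq\frac{h^2}{2}\|f''\|_\infty\cdot2k_2=\|f''\|_\infty k_2h^2$, as claimed.

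I do not expect any serious obstacle here; the only place demanding care is the bookkeeping for the second moment of $K_2$, where the factor of two (producing $2k_2$ rather than $k_2$) is essential and combines with the $\frac12$ from Taylor's remainder to give exactly the stated constant. The integrability check needed to apply Fubini for the first-moment identity is the sole additional point that is not entirely automatic, and it follows at once from $k_1<\infty$.
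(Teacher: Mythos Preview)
Your proposal is correct and follows essentially the same route as the paper: Tonelli/Fubini plus the linear substitution $u=s-z$ for the moment identities, then a Taylor expansion of $f$ combined with the computation $\int t^2K_2(t)\,dt=2k_2$ for the convolution bound. The only cosmetic differences are that the paper substitutes with the opposite sign (obtaining $f(x-hu)$) and uses the integral form of the Taylor remainder rather than your Lagrange form; both lead to the identical constant $\|f''\|_\infty k_2 h^2$.
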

\begin{proof}
Since $K \geq 0$ we have $K_2 \geq 0$. Moreover, we have
\[
	\int_{\RR}K_2(z)\,dz=\iint_{\RR\times\RR}K(s)K(s-z)\,dzds=1
\]
and this proves the first property. Similarly, recalling that $\int z K(z) \,dz =0$, we obtain
\[
\begin{aligned}
    \int_{\RR}z\,K_2(z)\,dz& 
    =\int_\RR K(s)\int_\RR(z-s+s)K(s-z)\,dz\,ds=0\,.
\end{aligned}
\]

\par

Next, we take any smooth function $f$ and compute
\[
\begin{aligned}
	&\int \frac{1}{h}K_2\left(\frac{z-x}{h}\right)f(z)\,dz 
    =\int K_2\left(u\right)f(x-hu)\,du\\
    &\qquad=f(x)+\int K_2(u)\int_{x}^{x-hu}f''(t)(t-x+hu)\,dt\,du\,.
\end{aligned}
\]

\par

Finally, we estimate the last term in the above formula as follows
\[
\begin{aligned}
	&\left|\int K_2(u)\int_{x}^{x-hu}f''(t)(t-x+hu)\,dt\,du\right|\\
    &\qquad\leq  \|f''\|_{\infty}\int K_2(u)\frac{h^2u^2}{2}\,du\\
    &\qquad= \frac{ \|f''\|_{\infty}h^2}{2}\left(\int K(s)\int (s-u)^2K(s-u)\,duds+\int s^2K(s)\int K(s-u)\,duds\right)\\
    &\qquad\leq \|f''\|_{\infty}k_2h^2\,.
\end{aligned}
\]
\end{proof}

\begin{lemma}[\textbf{product expectation}]\label{expProdLemma}
Let $K$ satisfy \eqref{kcond1} and \eqref{kcond2}, with $r=2$. Let $f$ be a probability density function that satisfies \eqref{pcond1} and \eqref{pcond2}, and let $\fhat_{n,h}(x)$ be an estimate of $f$ given by \eqref{kdensest}. Then
\begin{equation}\label{covexpr}
  \EXP[\fhat_{n,h}(x)\fhat_{n,h}(y)]-\EXP[\fhat_{n,h}(x)]\EXP[\fhat_{n,h}(y)]=\frac{1}{Nh}f(x) K_2\Big(\frac{x-y}{h} \Big)-E_{\Pi},
\end{equation}
where the error term
\[
E_{\Pi}=\frac{1}{N} \int \bigg(\, s K (s)  K\big(s - \frac{x-y}{h} \big) \Big(\int_{0}^{1}f'(x-shu)\,du \big)\bigg) ds+\frac{1}{N}\EXP[\fhat(x)]\EXP[\fhat(y)]
\]
satisfies
\begin{equation}\label{estcov1d}
\begin{aligned}
&|E_{\Pi}(x,y)|\leq \frac{C_\Pi}{N}\,, \qquad \Big|\int\int E_{\Pi}(x,y) \, dx dy\Big| 
\leq \frac{1}{N}\left(1+\frac{Ck_3h^3}{6}\right)^2\\
 &\int \int\Big| E_{\Pi}(x,y)\Big|dx dy  \leq \frac{1}{N}\left(1+k_1\,C\frac{C k_2h^2}{2}+\frac{Ck_3h^3}{6}\right)^2
\end{aligned}
\end{equation}
for some constant $C_\Pi$ and constants $C$ given in \eqref{pcond2} and $K_2$  defined in Lemma \ref{convoLemma}.


\begin{proof}
By the definition of the estimator $\fhat$ we have
\begin{equation}
  \begin{aligned}
   \EXP \Big(\fhat(x) \fhat(y) \bigg) 
   &=\EXP \bigg(\frac{1}{N^2h^2}\sum_{i,j=1}^N K\Big(\frac{x-X_i}{h}\Big)K\Big(\frac{y-X_j}{h}\Big) \bigg)\,.
  \end{aligned}
\end{equation}
Since all $\{X_i\}_{i=1}^{N}$ are i.i.d. we can split the calculation into two parts, one for the part, where the indexes coincide and the part, where indexes are different. We then can use the independence of the samples to simplify the calculation
\begin{equation}\label{expprodmixvar}
  \begin{aligned}
   \EXP \Big(\fhat(x) \fhat(y) \bigg)&=\frac{1}{N^2h^2} \EXP \bigg(\sum_{i=j} K(\frac{x-X_i}{h})K(\frac{y-X_i}{h}) \bigg)\\
   &\qquad+\frac{1}{N^2h^2} \EXP \bigg( \sum_{i \neq j}  K\Big(\frac{x-X_i}{h}\Big)K\Big(\frac{y-X_j}{h}\Big)\bigg)\\
   &{ = \frac{1}{Nh^2}\left[ \EXP \Big( K\Big(\frac{x-X}{h}\Big)K\Big(\frac{y-X}{h} \Big) \bigg)\right] + \Big(1-\frac{1}{N}\Big)\EXP[\fhat(x)]\EXP[\fhat(y)]}\\
  \end{aligned}
\end{equation}
where $X=X_1$. The first expectation term in \eqref{expprodmixvar} can be expanded as
\[
\begin{aligned}
  &\frac{1}{Nh^2}\EXP \left[ K\left(\frac{x-X}{h}\right)K\left(\frac{y-X}{h} \right) \right] \\
  & \qquad = \frac{1}{Nh^2} \int \,  K \Big(\frac{x-t}{h}\Big) K\Big(\frac{y-t}{h} \Big) \, f(t) \, dt\\
     &\qquad=\frac{1}{Nh} \int \,  K (s)  K\Big(s - \frac{x-y}{h} \Big) \, \Big( f(x)+\int_{x}^{x-sh}f'(z)\,dz \Big) \, ds\\
          &\qquad= f(x)\frac{1}{Nh} K_2\Big(\frac{x-y}{h} \Big)\\
     &\qquad\qquad-\frac{1}{N} \int \, s K (s)  K\Big(s - \frac{x-y}{h} \Big) \, \bigg(\int_{0}^{1}f'\left(x-shu\right)\,du \bigg) \, ds\\
\end{aligned}
\]
Let us denote
\[
E_{\Pi,1}=\frac{1}{N} \int \bigg(\, s K (s)  K\big(s - \frac{x-y}{h} \big) \Big(\int_{0}^{1}f'(x-shu)\,du \big)\bigg) ds\,, \quad E_{\Pi,2}=\frac{1}{N}\EXP[\fhat(x)]\EXP[\fhat(y)].
\]

Then we obtain
\begin{equation*} 
  \begin{aligned}
   &\EXP \Big(\fhat_{n,h}(x) \fhat_{n,h}(y) \bigg) - \EXP[\fhat_{n,h}(x)]\EXP[\fhat_{n,h}(y)]\\
   &\qquad = f(x)\frac{1}{Nh} K_2 \Big(\frac{x-y}{h} \Big)  ds-(E_{\Pi,1}+E_{\Pi,2}).\\
  \end{aligned}
\end{equation*}
and this establishes \eqref{covexpr}.
\par

Observe that \eqref{kcond1}, \eqref{kcond2} and \eqref{pcond1} imply
\[
|E_{\Pi,1}|\leq \frac{C\,k_1}{N}\,.
\]
Next, according to \eqref{biasexp} and \eqref{biaserrbound}
\[
|\EXP[\fhat(x)]|\leq C+\frac{C k_2 h^2}{2}+\frac{C k_3 h^3}{6} \quad \text{for all} \quad x\in \RR
\]
where $C$ is a maximum of constants from \eqref{pcond1} and hence
\[
|E_{\Pi,2}| \leq \frac{1}{N}\Big( C+\frac{C k_2 h^2}{2}+\frac{C k_3 h^3}{6} \Big)^2\,.
\]

Combining the above estimate
we conclude that
\[
|E_{\Pi}|=|E_{\Pi,1}+E_{\Pi,2}|   \leq \frac{1}{N}\left(Ck_1+\left(C+\frac{C k_2 h^2}{2}+\frac{C k_3 h^3}{6}\right)^2\right).
\]
To obtain bounds on the integral of the error term, let us consider each component of the error separately. The term $E_{\Pi,1}$ is integrable
\begin{equation}\label{coverrL1norm1}
\begin{aligned}
  \iint |E_{\Pi,1}(x,y)|\,dxdy &\leq \frac{1}{N} \iiint_{\RR^3} \, |s|\, K (s)  K\Big(s - \frac{x-y}{h} \Big) \, \Big(\int_{0}^{1}|f'\left(x-shu\right)|\,du \Big) ds\,dx\,dy\\
      &\leq \frac{1}{N} \int_{\RR} \, |s|\, K (s) \, \Big(\int_{0}^{1} \int_\RR|f'\left(x-shu\right)|\,dx \,du \Big)  ds\leq\frac{k_1\,C}{N}
\end{aligned}
\end{equation}
Next using Fubini Theorem, we obtain
\[
\begin{aligned}
  &\left|\iint E_{\Pi,1}(x,y)\,dxdy\right| \\
  & \leq \frac{1}{N} \left|\iiint_{\RR^3} \, s\, K (s)  K\Big(s - \frac{x-y}{h} \Big) \, \Big(\int_{0}^{1}f'\left(x-shu\right)\,du \Big) ds\, dx\,dy\right|\\
      &= \frac{1}{N} \left|\int_{\RR} \, s\, K (s) \, \Big( \int_{0}^{1}\int_\RR f'\left(x-shu\right)\,dx\,du \Big)\, ds\right|=0\,.
\end{aligned}
\]
Therefore, using Lemma \ref{biasformula}, \eqref{biasexp}, \eqref{biaserrbound} and the hypothesis \eqref{pcond2} we obtain

\[
\begin{aligned}
  \left|\iint_{\RR^2} E_{\Pi}(x,y)\,dxdy\right| & 
    =\frac{1}{N} \left|\int_{\RR} \EXP[\fhat(x)]\,dx\right|^2 
    \leq    \frac{1}{N}\left(1+\frac{Ck_3h^3}{6}\right)^2\,.
\end{aligned}
\]


Finally,  directly from \eqref{coverrL1norm1}, \eqref{biasexp} and \eqref{biaserrbound} we obtained
\[
\begin{aligned}
  \iint_{\RR^2} \left|E_{\Pi}(x,y)\right| \,dxdy &\leq \iint_{\RR^2} \left|E_{\Pi,1}(x,y)\right| dxdy+ \iint_{\RR^2} \left|E_{\Pi,2}(x,y)\right| dxdy\\
  &\leq \frac{k_1\,C}{N} + \frac{1}{N}\left(1+\frac{Ck_2h^2}{2}+\frac{Ck_3h^3}{6}\right)^2 \\
\end{aligned}
\]

\end{proof}
\end{lemma}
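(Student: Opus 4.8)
The plan is to compute the mixed second moment $\EXP[\fhat_{n,h}(x)\fhat_{n,h}(y)]$ directly from the definition \eqref{kdensest} and to separate the diagonal contribution, which carries the leading order, from the off-diagonal one, which merely reproduces the product of expectations. Writing $\fhat_{n,h}(x)\fhat_{n,h}(y)=\frac{1}{N^2h^2}\sum_{i,j=1}^N K(\frac{x-X_i}{h})K(\frac{y-X_j}{h})$ and taking expectations, I would first split the double sum according to whether $i=j$ or $i\neq j$. Since the samples are i.i.d., each of the $N(N-1)$ off-diagonal terms factors as $\EXP[K(\frac{x-X}{h})]\EXP[K(\frac{y-X}{h})]=h^2\EXP[\fhat(x)]\EXP[\fhat(y)]$, while the $N$ diagonal terms are identical and equal to $\EXP[K(\frac{x-X}{h})K(\frac{y-X}{h})]$. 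This yields $\EXP[\fhat(x)\fhat(y)]=\frac{1}{Nh^2}\EXP[K(\frac{x-X}{h})K(\frac{y-X}{h})]+(1-\frac{1}{N})\EXP[\fhat(x)]\EXP[\fhat(y)]$, so subtracting $\EXP[\fhat(x)]\EXP[\fhat(y)]$ leaves the diagonal term together with $-\frac{1}{N}\EXP[\fhat(x)]\EXP[\fhat(y)]$, which is precisely $-E_{\Pi,2}$.

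The next step is to expand the diagonal expectation. Writing it as $\frac{1}{Nh^2}\int K(\frac{x-t}{h})K(\frac{y-t}{h})f(t)\,dt$ and substituting $s=\frac{x-t}{h}$, so that $\frac{y-t}{h}=s-\frac{x-y}{h}$, produces $\frac{1}{Nh}\int K(s)K(s-\frac{x-y}{h})f(x-sh)\,ds$. Inserting the first-order Taylor expansion $f(x-sh)=f(x)+\int_x^{x-sh}f'(z)\,dz=f(x)-sh\int_0^1 f'(x-shu)\,du$ splits the integral into the leading term $\frac{f(x)}{Nh}\int K(s)K(s-\frac{x-y}{h})\,ds=\frac{f(x)}{Nh}K_2(\frac{x-y}{h})$, where I use the definition of $K_2$ from Lemma \ref{convoLemma}, and a remainder equal to $-E_{\Pi,1}$. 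Combining with the previous step establishes identity \eqref{covexpr}.

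For the pointwise bounds I would exploit $0\le K\le C$ and $|f'|\le C$ from \eqref{kcond1} and \eqref{pcond1} together with the finiteness of $k_1$ from \eqref{kcond2}: majorizing $K(s-\frac{x-y}{h})$ and $|f'|$ by $C$ collapses $|E_{\Pi,1}|$ to a multiple of $\frac{1}{N}\int|s|K(s)\,ds=\frac{k_1}{N}$, and the bias expansion \eqref{biasexp}--\eqref{biaserrbound} bounds $\EXP[\fhat]$ uniformly, hence controls $E_{\Pi,2}$. For the integral estimates the key device is Tonelli's theorem combined with the translation invariance of $\int_\RR|f'|$ supplied by \eqref{pcond2}, which gives $\iint|E_{\Pi,1}|\,dx\,dy\le C/N$. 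The vanishing of the signed integral $\iint E_{\Pi,1}\,dx\,dy=0$ then follows from Fubini (now justified by the $L^1$ bound) and the fact that $\int_\RR f'(x-shu)\,dx=0$ because $f\to 0$ at $\pm\infty$; consequently $\iint E_{\Pi}=\iint E_{\Pi,2}=\frac{1}{N}\big(\int\EXP[\fhat]\,dx\big)^2$, which is controlled using $\int\EXP[\fhat]\,dx=1$, equivalently the bound $1+\frac{Ck_3h^3}{6}$ coming from the bias estimate.

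The main obstacle is bookkeeping rather than conceptual. The delicate points are keeping the change of variables aligned with the Taylor remainder so that the argument $x-shu$ of $f'$ is correct, and carefully distinguishing the two integration regimes: Tonelli for the absolute-value bounds, and then Fubini for the signed integral that produces the crucial cancellation $\iint E_{\Pi,1}=0$. No deeper difficulty is expected, since the i.i.d. diagonal/off-diagonal split and the kernel-convolution structure encoded in $K_2$ do all the essential work.
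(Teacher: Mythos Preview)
Your proposal is correct and follows essentially the same route as the paper: the diagonal/off-diagonal split of the double sum, the substitution $s=(x-t)/h$ followed by a first-order Taylor expansion of $f(x-sh)$ to isolate the $K_2$ term and the remainder $E_{\Pi,1}$, and then the Tonelli/Fubini argument exploiting $\int_\RR f'(x-shu)\,dx=0$ to kill the signed double integral of $E_{\Pi,1}$. The only cosmetic difference is that you observe directly that $\int\EXP[\fhat]\,dx=1$, whereas the paper bounds it via the bias expansion by $1+\tfrac{Ck_3h^3}{6}$; both lead to the stated estimate.
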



\begin{theorem}[\bf MISE expansion]\label{miseest}
Let $K$ satisfy \eqref{kcond1} and \eqref{kcond2}, with $r=2$. Let $f$ be a probability density function that satisfies \eqref{pcond1} and \eqref{pcond2}, and let $\fhat_{n,h}(x)$ be an estimate of $f$ given by \eqref{kdensest}. Then
\begin{equation}\label{miseexact}
\MISE(\widehat{f}_{n,h})
=\frac{h^4k^2_2}{4}\int_{\RR}(f''(x))^2dx
+ \frac{1}{nh} f(x)\int_{\RR} K^2(t) \,dt
+ \mathcal{E}_b(n,h)+\mathcal{E}_V(n,h)
\end{equation}
with $\mathcal{E}_b$ and  $\mathcal{E}_V$ defined in \eqref{L1loworderbnd} and \eqref{L1errvarbnd}, respectively. Moreover, for every $H>0$ there exists $C_{f,K,H}$ such that
\begin{equation}\label{L1errmisebnd}
|\mathcal{E}_b(h,n)+\mathcal{E}_V(h,n)| \, \leqslant \, C_{f,K,H} \Big( h^5 + \frac{1}{n}\Big)
\end{equation}
for all $n \geq 1$ and  $H \geq  h > 0$.
\end{theorem}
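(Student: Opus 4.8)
The plan is to reduce the statement to the two preceding lemmas via the pointwise bias--variance decomposition and then integrate in $x$. First I would observe that for each fixed $x$ the mean squared error splits as
\[
\EXP\big[(\fhat_{n,h}(x)-f(x))^2\big] = \VAR(\fhat_{n,h})(x) + \bias^2(\fhat_{n,h})(x),
\]
the standard identity obtained by adding and subtracting $\EXP[\fhat_{n,h}(x)]$ inside the square. Since the integrand is nonnegative, Tonelli's theorem permits interchanging $\EXP$ and $\int_\RR$, giving
\[
\MISE(\fhat_{n,h}) = \int_\RR \bias^2(\fhat_{n,h})(x)\,dx + \int_\RR \VAR(\fhat_{n,h})(x)\,dx.
\]

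Next I would substitute the two expansions already established. The integrated squared bias is supplied verbatim by Lemma \ref{biasformula}$(iii)$, equation \eqref{sqintbias}, contributing the leading term $\tfrac{h^4 k_2^2}{4}\int_\RR (f''(x))^2\,dx$ together with the remainder $\mathcal{E}_b(n,h)$. For the integrated variance I would integrate \eqref{varexp} over $x$; using $\int_\RR f(x)\,dx = 1$, the leading part collapses to $\tfrac{1}{nh}\int_\RR K^2(t)\,dt$ and the remaining term is precisely $\mathcal{E}_V(n,h)=\int_\RR E_V(x)\,dx$ from Lemma \ref{variationformula}. Assembling these two pieces yields \eqref{miseexact} directly.

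The only point requiring attention is to uniformize the two error bounds over the range $0 < h \leq H$. From \eqref{intbias2err} I have $|\mathcal{E}_b(n,h)| \leq C_b\big(k_2 + \tfrac{k_3}{6}h\big)\tfrac{k_3 h^5}{6}$; bounding the first factor by $k_2 + \tfrac{k_3}{6}H$ shows that $|\mathcal{E}_b(n,h)| \leq C_1 h^5$ with $C_1 = C_1(f,K,H)$. Likewise \eqref{L1errvarbnd} gives $|\mathcal{E}_V(n,h)| \leq \tfrac{C_V}{n}\big(2 + h^2 k_2 + (k_2 + \tfrac{k_3}{3}h)\tfrac{h^5}{6}k_3\big)$, and for $h \leq H$ the bracketed factor is bounded by a constant $C_2 = C_2(f,K,H)$, so $|\mathcal{E}_V(n,h)| \leq C_2/n$. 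Setting $C_{f,K,H} = \max(C_1, C_2)$ then produces \eqref{L1errmisebnd}.

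This argument presents no genuine obstacle: all the analytic work---the Taylor expansions, the Tonelli/Fubini manipulations controlling the bias and variance remainders---is already discharged inside Lemmas \ref{biasformula} and \ref{variationformula}. The theorem is essentially bookkeeping: a pointwise MSE split, one application of Tonelli to integrate, substitution of the two lemmas, and a uniform-in-$h$ collapse of the polynomial error factors onto the dominant $h^5$ and $1/n$ scales.
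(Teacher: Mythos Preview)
Your proposal is correct and follows exactly the route of the paper's own proof: the bias--variance decomposition of $\MISE$ followed by direct invocation of Lemma~\ref{biasformula}$(iii)$ and Lemma~\ref{variationformula}. The paper's argument is even terser, simply stating the decomposition and that the result follows from the two lemmas; your additional remarks on Tonelli and on uniformizing the polynomial factors in $h$ over $0<h\leq H$ make explicit what the paper leaves to the reader.
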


\begin{proof}
It is easy to show (see \cite{SILV85}) that
\[
\begin{aligned}
\MISE(\fhat_{n,h})&=\int_{\mathbb{R}}\EXP[\fhat_{n,h}(x)-f(x)]^2\,dx\\
& = \int_{\mathbb{R}}\big(\bias(\fhat_{n,h})(x)\big)^2\, dx \SP +\int_{\mathbb{R}}\VAR(\fhat_{n,h}(x))\,dx\,.
\end{aligned}
\]
and hence the result follows from Lemma \ref{biasformula} and Lemma \ref{variationformula}.
\end{proof}

\end{document}